
\documentclass[12pt]{article}

\usepackage{CJK}
\usepackage{amsmath}
\usepackage{amssymb}
\usepackage{amsthm}
\usepackage{graphics}
\usepackage{graphicx}
\usepackage{subfigure}

\usepackage{bm}
\usepackage{geometry}
\usepackage{todonotes}

\usepackage{multirow}

\usepackage{booktabs}
\geometry{a4paper,scale=0.77}

\usepackage{color}
\usepackage[colorlinks,
            linkcolor=blue,
            anchorcolor=blue,
            citecolor=blue
            ]{hyperref}

\newcommand{\normmm}[1]{{\left\vert\kern-0.25ex\left\vert\kern-0.25ex\left\vert #1
    \right\vert\kern-0.25ex\right\vert\kern-0.25ex\right\vert}}

\begin{document}
\newtheorem{Proposition}{Proposition}[section]
\newtheorem{Remark}{Remark}[section]
\newtheorem{Theorem}[Proposition]{Theorem}
\newtheorem{Lemma}[Proposition]{Lemma}

\newcommand{\RNum}[1]{\uppercase\expandafter{\romannumeral #1\relax}}
\newcommand{\rNum}[1]{\romannumeral #1\relax}
\date{}

\title{Numerical schemes to reconstruct three dimensional time-dependent point sources of acoustic waves}

\author{Bo Chen$^{\rm a}$, Yukun Guo$^{\rm b}$, Fuming Ma$^{\rm c}$, Yao Sun$^{\rm a}$$^{\ast}$\thanks{$^\ast$Corresponding author. College of Science, Civil Aviation University of China, 2898 Jinbei Road, Dongli District, Tianjin 300300, China; syhf2008@gmail.com}\\
\small{$^{a}${\em{College of Science, Civil Aviation University of China, Tianjin, China}}};\\ \small{$^{b}${\em{School of Mathematics, Harbin Institute of Technology, Harbin, China}}}; \\ \small{$^{c}${\em{Institute of Mathematics, Jilin University, Changchun, China}}}}

\maketitle

\begin{abstract}
\small
This paper is concerned with the numerical simulation of three dimensional time-dependent inverse source problems of acoustic waves. The reconstructions of both multiple stationary point sources and a moving point source are considered. The modified method of fundamental solutions (MMFS), which expands the solution utilizing the time convolution of the Green's function and the signal function, is proposed to solve the problem. For the reconstruction of a moving point source, moreover, the MMFS is simplified as a simple sampling method at each time step. Numerical experiments are provided to show the effectiveness of the proposed methods.

\vspace{3mm}

\noindent\textbf{Keywords:} time-dependent, inverse source problem, wave equation, modified method of fundamental solutions, sampling method
\end{abstract}


\section{Introduction}

The inverse problems for partial differential equations appear in various fields of science and engineering, and have been extensively studied in the past decades \cite{Isakov1998Inverse,Li2008Multilevel,Li2014Locating}. Among them, the inverse source problem, especially the identification of moving sources, has a wide range of applications such as under water sonar \cite{Lim1994On,Lim1994On2}, sound simulation and sound source localization \cite{Lassas2010Inverse,Matsumoto2003A}.

For the reconstruction of stationary sources, the inverse source problems with sources $\delta(t)g(x)$ that are delta-like in time and of limited oscillation in space, and sources $q(t)\delta(\partial G)$ that are oscillation in time and delta-like on the boundary of a region are considered in \cite{De2015An} and \cite{Ton2003An}, respectively. Uniqueness analysis related to the Helmholtz equation with phaseless data is shown in \cite{Zhang2018Uniqueness,Zhang2018Retrieval}. The stability analysis and identification of multiple point sources for the time-harmonic case are considered in \cite{Alves2009Iterative,Badia2011An}. The conditional stability estimate of the wave equation on a line related to the inverse source problem is provided by \cite{Cheng2002UNIQUENESS,Cheng2005The}. Multi-frequency inverse source problems are analyzed in \cite{Bao2010A,Bao2015aresursive,Li2016Increasing,Zhang2015Fourier}. Analysis of random sources can be seen in \cite{Bao2014An,Li2011An}. Time-dependent inverse source problems in elastodynamics are analyzed in \cite{Bao2017Inverse}.

For the reconstruction of a moving point source, direct identifications of the moving point source are studied in \cite{Nakaguchi2012An,wang2017mathematical}. Analysis of the moving point source when the velocity of the source is comparable to the speed of wave propagation can be seen in \cite{Garnier2015Super}. Matched-filter imaging method and correlation-based imaging for small fast moving debris with constant velocity are analyzed in \cite{Fournier2017Matched}. A gesture-based input technique with the electromagnetic wave is analyzed in \cite{Lijingzhi2018On}.

The method of fundamental solutions (MFS) is a meshless method which expands the solution utilizing the fundamental solution \cite{Ahmadabadi2009The,ChenB2019Method,Sun2014Modified,Wei2010Convergence}. The property of the fundamental solution, or the Green's function, is the theoretical basis of the MFS. However, the Green's function of the d'Alembert operator $c^{-2}\partial_{tt}-\Delta$ is
\begin{equation*}
G(x,t;s)=\dfrac{\delta(t-c^{-1}|x-s|)}{4\pi|x-s|},
\end{equation*}
where $c>0$ denotes the sound speed of the homogeneous background medium, $\partial_{tt}u=\frac{\partial^2 u}{\partial t^2}$, $\Delta$ is the Laplacian in $\mathbb{R}^3$, and $\delta$ is the Dirac delta distribution. Since the Green's function involves the Dirac delta distribution, the MFS is no longer feasible to solve the three dimensional wave equation. Unable to be applied directly, the Green's function of the d'Alembert operator usually appears in the time convolution
\begin{equation*}
G(x,t;s)*\lambda(t)=\frac{\lambda(t-c^{-1}|x-s|)}{4\pi|x-s|},
\end{equation*}
where $\lambda(t)$ is a signal function. One of the most popular application of $G(x,t;s)*\lambda(t)$ is that in the boundary integral equation method, which is a commonly used method \cite{ChenB2016time,guo2013toward,sayas2011retarded,Sun2017Indirect}. Therefore, instead of the Green's function, new bases $G(x,t;s)*\lambda(t)$ are employed in the modified method of fundamental solutions (MMFS) proposed in this paper. Moreover, the MMFS can be simplified to a simple sampling method at each time step to reconstruct a moving point source, in which the sampling method is a well-known method in the numerical computation of inverse problems \cite{Guo2016A,Li2013Two,Li2008Multilevel,Li2009Strengthened,Zhang2018Locating}.

The time convolution of the Green's function and the signal function is an invaluable tool for the analysis of the time domain scattering problems. Therefore, the MMFS has important significance in the theory of the time domain analysis. Moreover, the proposed methods are feasible to reconstruct both multiple stationary point sources and a moving point source. The numerical implementations of the proposed methods are simple, and extensive experiments are provided to show the effectiveness of the methods.

The outline of this paper is as follows. In Section 2, the inverse source problem with multiple stationary point sources is considered. The uniqueness result is provided and the MMFS is proposed. In Section 3, the MMFS is applied to the inverse source problem with a moving point source. Moreover, the method is simplified as a simple sampling method at each discrete time. In Section 4, numerical experiments are provided to show the effectiveness of the proposed methods. The conclusion remarks are given in Section 5.

\section{Reconstruction of stationary point sources}

Denote by $\Omega\subset \mathbb{R}^3$ a bounded convex open region. Consider the wave equation
\begin{equation}
\label{waveequation1}c^{-2}\partial_{tt}u(x,t)-\Delta u(x,t)=\lambda(t)\sum\limits_{j=1}\limits^{M}a_j\delta(x-s_j),\quad\quad x\in\mathbb{R}^3,\;t\in\mathbb{R},
\end{equation}
where $M\in\mathbb{N}^{*}$ is a positive integer, $s_j\in\Omega$ are stationary source points, and $a_j > 0$ are the intensities of the sources.

The source points $s_j$ are assumed to be mutually distinct. The signal function $\lambda(t)$ is assumed to be causal, which means $\lambda(t)=0$ for $t<0$. Thus the source term $f(t)=0$ for $t<0$, and the initial condition
\begin{equation}\label{initial}
u(\cdot,0)=\partial_t u(\cdot,0)=0\quad\quad\textup{in}\;\,\mathbb{R}^3
\end{equation}
is a direct conclusion of the causality.

The inverse source problem (P1) under consideration is: Determine the locations and intensities of the stationary point sources in \eqref{waveequation1} from the measurement data
\begin{equation}\label{boundary1}
u(x,t), \quad\quad  x\in\partial \Omega,\,t\in \mathbb{R}.
\end{equation}

The following lemma is needed to prove the uniqueness of the solution to the inverse source problem (P1).

\begin{Lemma}\label{lemma.distance}
Let $S:=\bigcup\limits_{j=1}\limits^{M}\left\{s_j\right\}$ be a set of points in a bounded convex open region $\Omega\subset \mathbb{R}^3$, where $M\in\mathbb{N}^{*}$. Assume that
$$d:=\min\limits_{x\in\partial \Omega,s\in S}|x-s|=\left|x_0-s_{k}\right|,$$
where $x_0\in \partial \Omega$ and $1\leq k\leq M$. Then
$$d_i':=\min\limits_{s\in S\backslash\left\{s_{k}\right\}}\left|x_0-s\right|>d.$$
\end{Lemma}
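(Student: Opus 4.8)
The plan is a short argument by contradiction. First I would unpack the definition of $d$: since $d=\min_{x\in\partial\Omega,\,s\in S}|x-s|$, for every index $j$ and every $x\in\partial\Omega$ we have $|x-s_j|\ge d$, i.e. $\operatorname{dist}(s_j,\partial\Omega)\ge d$; as $\Omega$ is open this forces the open ball $B(s_j,d)$ to lie inside $\Omega$ for each $j$. In particular $|x_0-s_j|\ge d$ for all $j$, so the statement will follow once I show $|x_0-s_j|>d$ strictly for every $j\ne k$. Suppose this fails for some index $j_0\ne k$; then $|x_0-s_{j_0}|=d=|x_0-s_k|$, and $s_{j_0}\ne s_k$ since the source points are distinct.

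The key step is to use convexity to manufacture a large inscribed ball centred at the midpoint $p:=\tfrac12(s_k+s_{j_0})$. From $B(s_k,d)\subseteq\Omega$, $B(s_{j_0},d)\subseteq\Omega$, the identity $B(a,r)+B(b,\rho)=B(a+b,r+\rho)$ for Minkowski sums, and the convexity of $\Omega$ (so $\tfrac12\Omega+\tfrac12\Omega\subseteq\Omega$), I get
\[
B(p,d)=\tfrac12 B(s_k,d)+\tfrac12 B(s_{j_0},d)\subseteq \tfrac12\Omega+\tfrac12\Omega\subseteq\Omega ,
\]
hence $\operatorname{dist}(p,\partial\Omega)\ge d$.

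On the other hand $x_0\in\partial\Omega$ gives $\operatorname{dist}(p,\partial\Omega)\le|p-x_0|$, while the strict convexity of the Euclidean norm yields
\[
|p-x_0|=\Bigl|\tfrac12(x_0-s_k)+\tfrac12(x_0-s_{j_0})\Bigr|<\tfrac12\bigl(|x_0-s_k|+|x_0-s_{j_0}|\bigr)=d ,
\]
the inequality being strict because $x_0-s_k$ and $x_0-s_{j_0}$ are vectors of the same nonzero length $d$ that are unequal (equality in the triangle inequality would force them positively proportional, hence equal, hence $s_k=s_{j_0}$). Combining the two displays gives $d\le\operatorname{dist}(p,\partial\Omega)\le|p-x_0|<d$, a contradiction. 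Therefore $|x_0-s_j|>d$ for every $j\ne k$, and since $S\setminus\{s_k\}$ is finite we conclude $d_i'>d$.

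I expect the only subtle point — and the step that genuinely uses the hypotheses — to be the inclusion $B(p,d)\subseteq\Omega$: it needs both the convexity of $\Omega$ and the fact that every source point (not just $s_k$) is at distance at least $d$ from $\partial\Omega$, which is precisely the content of ``$d$ is the global minimum over $\partial\Omega\times S$'' rather than the weaker ``$d=|x_0-s_k|$''. The remaining ingredient is the elementary observation that the midpoint of a genuine chord of a sphere lies strictly inside that sphere.
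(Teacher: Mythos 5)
Your argument is correct, and it reaches the contradiction by a noticeably different mechanism than the paper. The paper also argues by contradiction, but it builds an explicit three-dimensional figure: a rectangle $s_k s_{k'}AB$ with $s_{k'}A=s_kB=d$ chosen so that $x_0$ lies in it, notes that $A,B\in\overline{\Omega}$ because each source point is at distance at least $d$ from $\partial\Omega$, invokes convexity to place the rectangle in $\overline{\Omega}$, rotates it about the segment $s_ks_{k'}$ to obtain a solid cylinder in $\overline{\Omega}$, and finally extracts a small ball $B_{\varsigma}(x_0)$ inside that cylinder to contradict $x_0\in\partial\Omega$. Your version replaces the rectangle-and-cylinder construction with the single midpoint $p=\tfrac12(s_k+s_{j_0})$: the Minkowski-sum identity together with $\tfrac12\Omega+\tfrac12\Omega\subseteq\Omega$ gives $B(p,d)\subseteq\Omega$, and the strict triangle inequality for two unequal vectors of equal length $d$ gives $|p-x_0|<d$, so $x_0\in B(p,d)\subseteq\Omega$, which is the contradiction. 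The two proofs exploit the same underlying facts (every source point is at distance at least $d$ from $\partial\Omega$, plus convexity), but yours is more analytic and dimension-independent, and it sidesteps two points the paper leaves implicit: that $x_0$ actually lies in the \emph{interior} of the cylinder, and that a boundary point of $\Omega$ cannot have a neighbourhood contained in $\overline{\Omega}$ (which for a general open set is false and here needs convexity, e.g. via $\operatorname{int}(\overline{\Omega})=\Omega$); working directly with open balls inside the open set $\Omega$ makes the final contradiction immediate. The paper's construction, in exchange, is more visual and is echoed later in the text (the end of part (\rNum{3}) of Theorem \ref{prop.uniqueness} refers back to this geometric picture).
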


\begin{proof}
\begin{figure}
\centering
\includegraphics [width=0.40\textwidth]{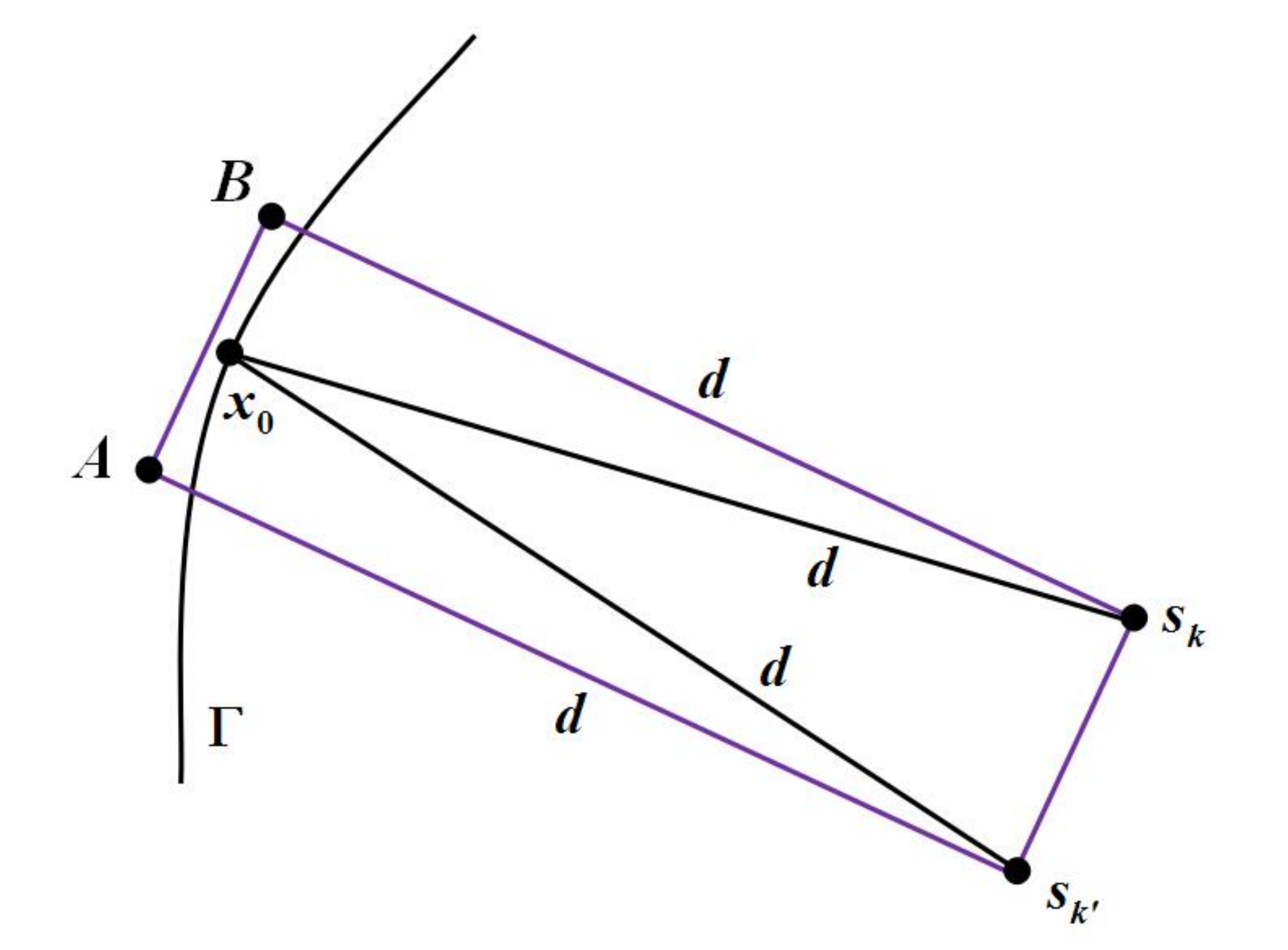}
\caption{Proof of Lemma \ref{lemma.distance}.}\label{fig-distance}
\end{figure}
It is obvious that $d'\geqslant d$. By reduction to absurdity, assume that $d'=\left|x_0-s_{k'}\right|=d$ for some $s_{k'}\in S\backslash\left\{s_{k}\right\}$. As is shown in Figure \ref{fig-distance}, construct a rectangle $s_{k}s_{k'}AB$ with $s_{k'}A=s_{k}B=d$, such that $x_0\in s_{k}s_{k'}AB$. Then $d=\min\limits_{x\in\partial \Omega,s\in S}|x-s|$ implies that $A,B\in\overline{\Omega}$. Thus $s_{k}s_{k'}AB\subset\overline{\Omega}$ since $\overline{\Omega}$ is a closed convex region. Spinning $s_{k}s_{k'}AB$ around the segment $s_{k}s_{k'}$, we get a cylinder $V\subset\overline{\Omega}$. Then there exists a $\varsigma>0$ such that $$B_{\varsigma}(x_0):=\left\{x:\left|x-x_0\right|<\varsigma\right\}\subset V\subset\overline{\Omega},$$
which is a contradiction to $x_0\in\partial \Omega$.
\end{proof}

Then we have the following uniqueness result.

\begin{Theorem}\label{prop.uniqueness}
Assume that $\Omega\subset \mathbb{R}^3$ is a bounded convex open region. Let
$$f_i(x,t)=\lambda^{(i)}(t)\sum\limits_{j=1}\limits^{M_i}a_j^{(i)}\delta\left(x-s_j^{(i)}\right), \quad i=1,2$$
be two source terms with $M_i\in\mathbb{N}^*$, $s_j^{(i)}\in \Omega$, $a_j^{(i)} > 0$ and $\lambda^{(i)}(t)\in C(\mathbb{R})$, such that the corresponding solutions to \eqref{waveequation1} for $f_1$ and $f_2$ are $u_1$ and $u_2$, respectively. Assume that $\lambda^{(i)}(t)$ are nontrivial causal functions and
$$u_1=u_2 \quad\quad  \text{on}\;\; \partial \Omega\times\mathbb{R}.$$
Then $M_1=M_2=M$, $s_j^{(1)}=s_{\pi(j)}^{(2)}$ and $a_j^{(1)}\lambda^{(1)}(t)=a_{\pi(j)}^{(2)}\lambda^{(2)}(t)$, $j=1,2,\ldots,M$ for some permutation $\pi(j)$ of $1,2,\ldots,M$.
\end{Theorem}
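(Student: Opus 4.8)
The plan is to read the sources off the successive wavefronts of the field and then peel them away one at a time. First I would write the solution of \eqref{waveequation1}–\eqref{initial} explicitly through the retarded Green's function: by causality (whence the zero initial data),
\[
u_i(x,t)=\sum_{j=1}^{M_i}\frac{a_j^{(i)}}{4\pi|x-s_j^{(i)}|}\,\lambda^{(i)}\!\big(t-c^{-1}|x-s_j^{(i)}|\big),
\]
so the hypothesis $u_1=u_2$ on $\partial\Omega\times\mathbb{R}$ becomes an identity between finite sums of retarded signals. For a continuous causal $\lambda$, put $\tau(\lambda):=\sup\{T:\lambda\equiv 0\text{ on }(-\infty,T]\}$.

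The engine of the argument is that, since all intensities are positive, nothing cancels along the first wavefront: for each fixed $x\in\partial\Omega$, $u_i(x,\cdot)$ vanishes for $t\le c^{-1}\mathrm{dist}(x,S_i)+\tau(\lambda^{(i)})$ and immediately afterwards equals a \emph{positive} multiple of $\lambda^{(i)}\!\big(t-c^{-1}\mathrm{dist}(x,S_i)\big)$ carried by the source(s) of $S_i$ nearest to $x$. Equating these onset times over $x\in\partial\Omega$ gives $c^{-1}\mathrm{dist}(x,S_1)+\tau(\lambda^{(1)})=c^{-1}\mathrm{dist}(x,S_2)+\tau(\lambda^{(2)})$ for all $x$; comparing global minima, any minimizer $x_0$ of $\mathrm{dist}(\cdot,S_1)$ on $\partial\Omega$ also minimizes $\mathrm{dist}(\cdot,S_2)$. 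At such $x_0$, Lemma \ref{lemma.distance} makes the nearest point $s_*^{(i)}\in S_i$ unique; moreover $B_{d_i}(s_*^{(i)})$ (with $d_i:=|x_0-s_*^{(i)}|$) lies in $\Omega$ and touches $\partial\Omega$ at $x_0$, so $\Omega$ has a unique supporting hyperplane there, and consequently both $s_*^{(1)},s_*^{(2)}$ lie on the inward normal ray from $x_0$. Substituting this, together with the supporting-hyperplane inequality $\langle x-x_0,\nu\rangle\ge 0$, into the onset identity for $x\in\partial\Omega$ near $x_0$ (where $\mathrm{dist}(x,S_i)=|x-s_*^{(i)}|$) forces, by a short computation, $d_1=d_2$ — hence $s_*^{(1)}=s_*^{(2)}=:s_*$ and $\tau(\lambda^{(1)})=\tau(\lambda^{(2)})$. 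Comparing the leading profiles of $u_1$ and $u_2$ at $x_0$ over the time-window preceding the second echo then yields $a_*^{(1)}\lambda^{(1)}=a_*^{(2)}\lambda^{(2)}$ on that window.

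It remains to upgrade this to the full conclusion, which I would organize as an induction on $M_1+M_2$: subtract the common nearest-source term from both fields and repeat. The base case $M_1=M_2=1$ is immediate, since with a single source there are no later echoes, so the profile identity propagates to all $t$ and, with $s_*^{(1)}=s_*^{(2)}$, closes the case. The main obstacle is precisely the inductive step: after peeling, the two reduced fields are only known to agree on $\partial\Omega$ up to the arrival of the next wavefront — not for all $t$ — because the tail of $\lambda$ overlaps the subsequent echoes, so the inductive hypothesis does not literally apply. To fix this one must either run a careful sweep tracking the successive "clean" time-windows, or — once the wavefront analysis has identified all source locations geometrically — pass to the time Fourier (Laplace) transform: the identity becomes one between (modified) Helmholtz point-source fields on $\partial\Omega$, which by unique continuation extend to $\mathbb{R}^3\setminus(S_1\cup S_2)$ and whose singularities at the $s_j$ must match, forcing $M_1=M_2$, $s_j^{(1)}=s_{\pi(j)}^{(2)}$ and $a_j^{(1)}\widehat{\lambda^{(1)}}=a_{\pi(j)}^{(2)}\widehat{\lambda^{(2)}}$; inverting the transform gives the claimed $a_j^{(1)}\lambda^{(1)}=a_{\pi(j)}^{(2)}\lambda^{(2)}$. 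I expect this bookkeeping around overlapping echoes to be the delicate point, whereas the geometric input — Lemma \ref{lemma.distance} and the interior-ball/supporting-hyperplane argument — is the comparatively routine part.
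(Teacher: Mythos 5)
Your first-arrival analysis is sound and follows the same wavefront strategy as the paper's parts (\rNum{1})--(\rNum{3}): write the fields as retarded potentials, use positivity of the intensities to rule out cancellation on the leading front, and compare onset times at a boundary point minimizing the distance to the source set. Your identification of the nearest sources is in fact a nice variant --- the paper perturbs $x_0$ along $\partial\Omega$ and reruns the timing argument to force $t_1=t_2$ and $d_1=d_3$, whereas you use the interior ball $B_{d_i}(s_*^{(i)})$ touching $\partial\Omega$ at $x_0$ to pin both nearest sources to the inward normal ray and then compare second-order terms of $|x-s_*^{(i)}|$ for $x\in\partial\Omega$ near $x_0$; both work, and yours is arguably more transparent. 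Up to the conclusion $a_*^{(1)}\lambda^{(1)}=a_*^{(2)}\lambda^{(2)}$ \emph{on an initial time window}, your argument is complete.

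The genuine gap is exactly the one you flag and then leave unresolved: upgrading the window identity to all $t$ and closing the induction. Neither of your two proposed remedies is carried out, and the Laplace-transform route has an additional hypothesis problem --- $\lambda^{(i)}$ is only assumed continuous and causal, so its Laplace/Fourier transform in $t$ over all of $\mathbb{R}$ need not exist without a growth assumption the theorem does not make. The paper's resolution (part (\rNum{4})) avoids your ``agreement only up to the next echo'' obstruction entirely: rather than subtracting the common term and inducting on two reduced fields that coincide only on a window, it attaches to the common location $s_{k_1}^{(1)}=s_{k_2''}^{(2)}$ the single residual signal $\zeta(t)=a_{k_1}^{(1)}\lambda^{(1)}(t)-a_{k_2''}^{(2)}\lambda^{(2)}(t)$ and regards $w_0=u_1-u_2$ as radiated by the source set $S^{(1)}\cup S^{(2)}$ in which this one point carries $\zeta$ and the others carry $\lambda^{(1)}$ or $\lambda^{(2)}$. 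Since $w_0$ vanishes on $\partial\Omega$ for \emph{all} $t$, no window restriction is needed: if $\zeta\not\equiv 0$ it is again a nontrivial causal signal, but with a strictly later onset $t_3>t_1$, and rerunning the first-arrival argument on the sources grouped by onset time forces $t_3=t_1$, a contradiction; hence $\zeta\equiv 0$, the common source is removed exactly, and the count drops by one. You should adopt this device (or prove an equivalent ``clean-window sweep'' in detail); as written, your proof establishes the first matched source and the signal identity only on an initial interval, which does not yet yield $M_1=M_2$ or the full pairing.
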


\begin{proof}
Denote $w:=u_1-u_2$. Then
\begin{align}
\label{waveequation-ball2}c^{-2}\partial_{tt}w-\Delta w=f_1-f_2&\quad\quad \textup{in}\;\,\mathbb{R}^3\times\mathbb{R},\\
\label{boundary-ball2}w=0&\quad\quad \textup{on}\;\,\partial \Omega\times\mathbb{R}.
\end{align}

Note that
\begin{equation*}
w_0(x,t):=\sum\limits_{j=1}\limits^{M_1}a_j^{(1)}G\left(x,t;s_j^{(1)}\right)*\lambda^{(1)}(t) -\sum\limits_{j=1}\limits^{M_2}a_j^{(2)}G\left(x,t;s_j^{(2)}\right)*\lambda^{(2)}(t)
\end{equation*}
is the unique causal solution (refer to Section 1.4 of \cite{sayas2011retarded} for the uniqueness) of the wave equation \eqref{waveequation-ball2}.

\begin{figure}
\centering
\includegraphics [width=0.33\textwidth]{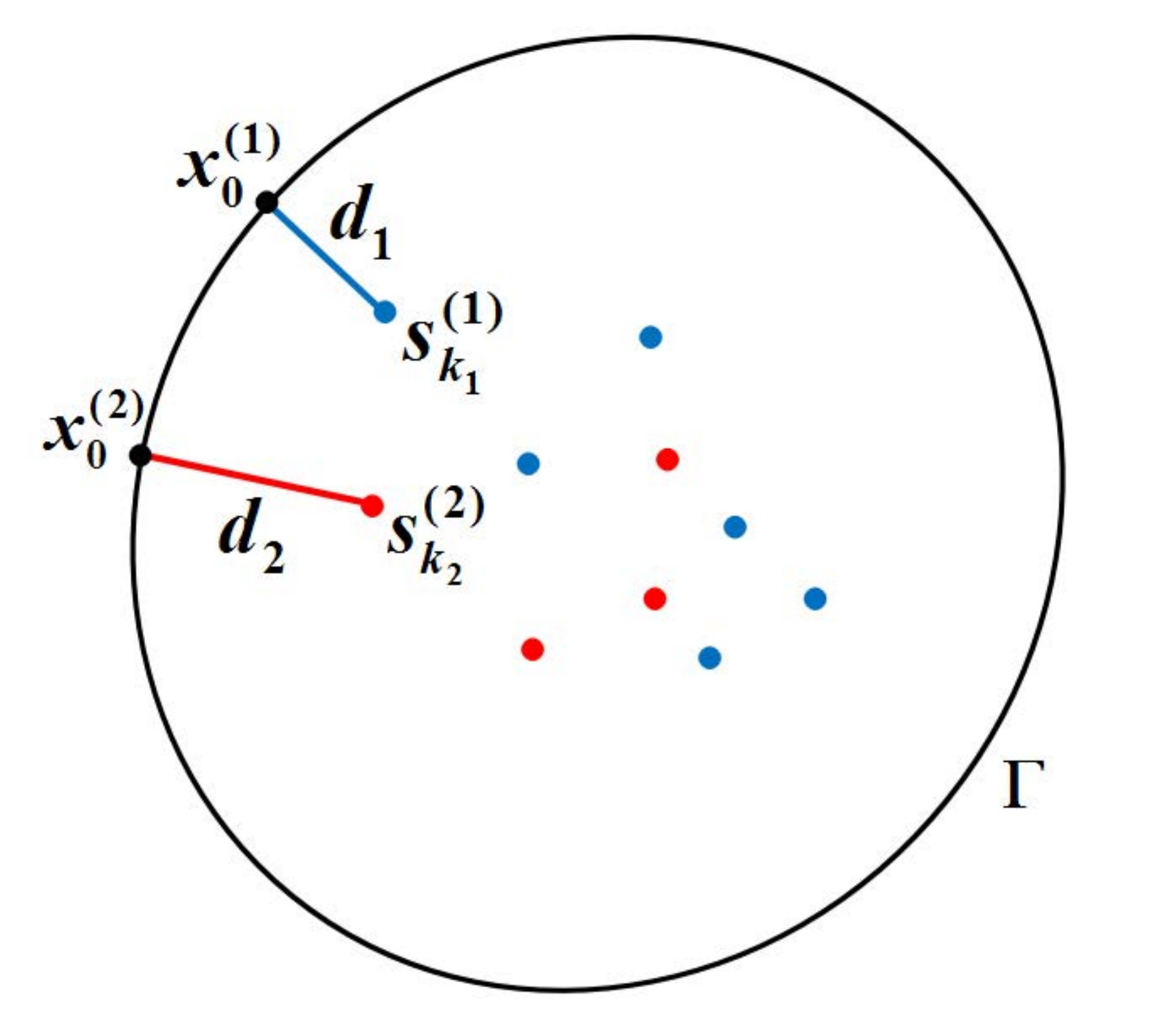}
\caption{Sketch of the measurement surface and the source points.}\label{fig-sketch}
\end{figure}

For the convenience of the expression, the rest of the proof is divided into five parts.

(\rNum{1}) As is shown in Figure \ref{fig-sketch}, denote
$$S^{(i)}:=\bigcup\limits_{j=1}\limits^{M_i}\left\{s_j^{(i)}\right\},\quad\quad i=1,2$$
and
\begin{equation}\label{eq-distance}
d_i:=\min\limits_{x\in\partial \Omega,s\in S^{(i)}}|x-s|=\left|x_0^{(i)}-s_{k_i}^{(i)}\right|,\quad\quad i=1,2,
\end{equation}
in which $x_0^{(i)}\in \partial \Omega$ and $1\leq k_i\leq M_i$. Notice that there may be several sets of points $x_0^{(i)}$ and $s_{k_i}^{(i)}$ which satisfy \eqref{eq-distance}. Nevertheless, the choice of the points would not affect the following proof.

Moreover, Lemma \ref{lemma.distance} implies that
$$d_i':=\min\limits_{s\in S^{(i)}\big\backslash\left\{s_{k_i}^{(i)}\right\}}\left|x_0^{(i)}-s\right|>d_i,\quad\quad i=1,2.$$

(\rNum{2}) Next, we are going to take into consideration of the causality. Since $\lambda^{(i)}(t)\in C(\mathbb{R})$ are nontrivial causal functions, we have
\begin{equation*}
\begin{cases}
\lambda^{(i)}(t)=0,\quad &t\leq t_i,\\
\lambda^{(i)}(t)\neq0,\quad &t_i<t<t_i+\tau,
\end{cases}
\quad\quad i=1,2
\end{equation*}
for some $t_i>0$ and $\tau>0$. Denote $t_i$ as the ``starting time'' of the signal $\lambda^{(i)}(t)$.

Assume that $t_1+c^{-1}d_1<t_2+c^{-1}d_2$. Then
$$w_0\left(x_0^{(1)},t\right)=a_{k_1}^{(1)} \frac{\lambda^{(1)}(t-c^{-1}d_1)}{4\pi d_1}, \quad\quad t\in(T_1,T_2),$$
where $T_1=t_1+c^{-1}d_1$ and $T_2=\min\{t_1+c^{-1}d_1',t_2+c^{-1}d_2\}$. Note that $\lambda^{(1)}(t-c^{-1}d_1)$ is nontrivial for $t\in(T_1,T_2)$. Then \eqref{boundary-ball2} implies $a_{k_1}^{(1)}=0$, which is a contradiction to $a_j^{(i)}>0$. Using the reduction to absurdity, we have $t_1+c^{-1}d_1\geq t_2+c^{-1}d_2$. Similarly, we can prove that $t_2+c^{-1}d_2\geq t_1+c^{-1}d_1$. Thus
\begin{equation}\label{equality1}
t_1+c^{-1}d_1 = t_2+c^{-1}d_2.
\end{equation}

(\rNum{3}) Denote
$$d_3 :=\min\limits_{s\in S^{(2)}}\left|x_0^{(1)}-s\right|.$$
Apparently $d_3\geq d_2$. Assume that $d_3 > d_2$. Then \eqref{equality1} implies $t_1+c^{-1}d_1<t_2+c^{-1}d_3$. A similar discussion as that in (\rNum{2}) leads to a contradiction, which means $d_3 = d_2$. Then \eqref{equality1} implies
\begin{equation}\label{equality2}
t_1+c^{-1}d_1 = t_2+c^{-1}d_3.
\end{equation}
Moreover, there is a point $s_{k_2''}^{(2)}\in S^{(2)}$ such that
$$\left|x_0^{(1)}-s_{k_2''}^{(2)}\right|=d_3=d_2=\min\limits_{x\in\partial \Omega,s\in S^{(2)}}|x-s|.$$

Therefore, we can reselect a new set of points $x_0^{(2)}$ and $s_{k_2}^{(2)}$ satisfying \eqref{eq-distance} with $x_0^{(2)}=x_0^{(1)}$ and $s_{k_2}^{(2)}=s_{k_2''}^{(2)}$.

Denote
$$d_3' :=\min\limits_{s\in S^{(2)}\big\backslash\left\{s_{k_2''}^{(2)}\right\}}\left|x_0^{(1)}-s\right|.$$
Then Lemma \ref{lemma.distance} implies $d_3'>d_3$. Assuming that $t_1>t_2$,  \eqref{equality2} implies $d_1<d_3$. For a point $x^*\in \left\{x\in\partial \Omega: 0<\left|x-x_0^{(1)}\right|<\frac{1}{2}\min\{d_1'-d_1,d_3'-d_3\}\right\}$, we have
$$\left|x^*-s_{k_2''}^{(2)}\right|-\left|x^*-s_{k_1}^{(1)}\right|<d_3-d_1.$$
Then \eqref{equality2} implies $t_2+c^{-1}\left|x^*-s_{k_2''}^{(2)}\right|<t_1+c^{-1}\left|x^*-s_{k_1}^{(1)}\right|$ and
\begin{equation*}
w_0(x^*,t)=-a_{k_2''}^{(2)} \frac{\lambda^{(2)}\left(t-c^{-1}\left|x^*-s_{k_2''}^{(2)}\right|\right)}{4\pi \left|x^*-s_{k_2''}^{(2)}\right|},\quad\quad t\in (T_3,T_4),
\end{equation*}
where $T_3=t_2+c^{-1}\left|x^*-s_{k_2''}^{(2)}\right|$ and $T_4=\min\left\{t_1+c^{-1}\left|x^*-s_{k_1}^{(1)}\right|, t_2+\frac{1}{2}c^{-1}(d_3+d_3')\right\}$. A similar discussion as that in (\rNum{2}) implies a contradiction, which means $t_1\leq t_2$. Similarly we have $t_2\leq t_1$. Then $t_1=t_2$ and $d_1=d_3$. Referring to the proof of Lemma \ref{lemma.distance}, we finally get $s_{k_1}^{(1)}=s_{k_2''}^{(2)}$.

(\rNum{4}) Define
$$\zeta(t)=a_{k_1}^{(1)}\lambda^{(1)}(t)-a_{k_2''}^{(2)}\lambda^{(2)}(t),$$
which is the real signal function of the source point $s_{k_1}^{(1)}=s_{k_2''}^{(2)}$. Then we assert that $\zeta(t)=0$. Otherwise, we can prove that
\begin{equation*}
\begin{cases}
\zeta(t)=0,\quad &t\leq t_3,\\
\zeta(t)\neq0,\quad &t_3<t<t_3+\tau'
\end{cases}
\end{equation*}
for some $t_3\geq t_2+c^{-1}\min\{d_1'-d_1,d_3'-d_3\}>t_2=t_1$ and $\tau'>0$. Notice that the set of source points $S:=S^{(1)}\cup S^{(2)}$ can be divided into two categories: $S^{(3)}$ such that the ``starting time'' of the signal function of $s\in S^{(3)}$ is $t_3$ and $S^{(4)}=S\setminus S^{(3)}$ such that the ``starting time'' of the signal function of $s\in S^{(4)}$ is $t_1$. If $S^{(4)}=\varnothing$, it is easy to get a contradiction. For $S^{(4)}\neq \varnothing$, a similar discussion as that in (\rNum{1})-(\rNum{3}) implies $t_3=t_1$, which is a contradiction to $t_3>t_1$.

Since we have proved $s_{k_1}^{(1)}=s_{k_2''}^{(2)}$ and $a_{k_1}^{(1)}\lambda^{(1)}(t)=a_{k_2''}^{(2)}\lambda^{(2)}(t)$, the wave field can be rewritten as
\begin{equation*}
w_0(x,t)=\sum_{\substack{j=1,...,M_1\\ j\neq k_1}} a_j^{(1)} G\left(x,t;s_j^{(1)}\right)*\lambda^{(1)}(t) -\sum_{\substack{j=1,...,M_2\\ j\neq k_2''}}a_j^{(2)}G\left(x,t;s_j^{(2)}\right)*\lambda^{(2)}(t).
\end{equation*}

(\rNum{5}) Assume that $M_1\neq M_2$, there is no harm to suppose that $M_1>M_2$. Following the procedure of (\rNum{1})-(\rNum{4}), we can get
\begin{equation*}
w_0(x,t)=\sum\limits_{j=1}\limits^{M_1-M_2} a_j^{(1')} G\left(x,t;s_j^{(1')}\right)*\lambda^{(1)}(t),
\end{equation*}
where $a_j^{(1')}>0$. Again, it is easy to get a contradiction. Then we have $M_1=M_2=M$. Following the procedure of (\rNum{1})-(\rNum{4}), we can finally get $s_j^{(1)}=s_{\pi(j)}^{(2)}$ and $a_j^{(1)}\lambda^{(1)}(t)=a_{\pi(j)}^{(2)}\lambda^{(2)}(t)$, $j=1,2,\ldots,M$ for some permutation $\pi(j)$ of $1,2,\ldots,M$.
\end{proof}

The classic MFS expands the solution utilizing the Green's function (refer to \cite{Sun2014Modified,Sun2017Indirect,Wei2010Convergence}). However, since the Green's function of the d'Alembert operator involves the Dirac delta distribution, the MFS is no longer feasible to solve the three dimensional wave equation. Hence, consider the expansion
\begin{equation}\label{MFS1}
u(x,t)=\sum\limits_{l=1}\limits^{N_z}c(z_l)G(x,t;z_l)*\lambda(t),
\end{equation}
where $N_z\in\mathbb{N}^*$, $z_l\in\Omega$ are the sampling points, and $c(z_l)$ are unknown coefficients to be computed.

The expansion \eqref{MFS1} leads to the first modified method of fundamental solutions (MMFS1). We introduce the following proposition concerning the MMFS1.

\begin{Proposition}\label{prop.algorithm1}
Assume that $\Omega\subset \mathbb{R}^3$ is a bounded convex open region. Let $u(x,t)$ be a causal wave field which solves
\begin{equation}\label{waveequation3}
c^{-2}\partial_{tt}u(x,t)-\Delta u(x,t)=\lambda(t)\sum\limits_{j=1}\limits^{M}a_j\delta(x-s_j), \quad\quad x\in\mathbb{R}^3,\;t\in\mathbb{R},
\end{equation}
where $M\in\mathbb{N}^*$, $s_j\in\Omega$, $a_j > 0$, and $\lambda(t)\in C(\mathbb{R})$ is a non-trivial causal signal function. Assuming that the sampling points $z_l\in\Omega,\,l=1,2,\ldots,N_z$ and a group of corresponding constants $c(z_l)$ satisfy
\begin{equation}\label{eq-alg1-continuous}
\sum\limits_{l=1}\limits^{N_z}c(z_l)G(x,t;z_l)*\lambda(t)=u(x,t),\quad \quad x\in\partial \Omega,\,t\in\mathbb{R},
\end{equation}
where $N_z\in\mathbb{N}^*$. Denote $S_d:=\{s_j\}_{j=1}^{M}$ and $Z_d:=\{z_l\}_{l=1}^{N_z}$. Then $S_d\subset Z_{d}$. Moreover,
\begin{equation*}
c(z_l)=
\begin{cases}
a_j,\quad\quad &z_l\in S_d,\\
0,\quad\quad &z_l\in Z_d\setminus S_d.
\end{cases}
\end{equation*}
\end{Proposition}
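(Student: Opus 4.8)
The plan is to view the hypothesis \eqref{eq-alg1-continuous} as asserting that two causal solutions of the same wave equation agree on $\partial\Omega\times\mathbb{R}$, and then to invoke Theorem~\ref{prop.uniqueness}. First I would observe that the left-hand side of \eqref{eq-alg1-continuous} is, by the representation recalled in the proof of Theorem~\ref{prop.uniqueness}, the unique causal solution of
\begin{equation*}
c^{-2}\partial_{tt}v(x,t)-\Delta v(x,t)=\lambda(t)\sum_{l=1}^{N_z}c(z_l)\delta(x-z_l),\qquad x\in\mathbb{R}^3,\ t\in\mathbb{R},
\end{equation*}
while $u$ is the unique causal solution of \eqref{waveequation3}. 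The equation \eqref{eq-alg1-continuous} says precisely that $v=u$ on $\partial\Omega\times\mathbb{R}$.

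The obstacle is that Theorem~\ref{prop.uniqueness} as stated requires both source terms to be genuine sums of positively weighted delta functions with a \emph{nontrivial} signal, whereas here some of the $c(z_l)$ could a priori be zero or negative, and the same point $z_l$ could coincide with a source point $s_j$, producing cancellation. So before applying the theorem I would first prune: let $Z_d^+:=\{z_l: c(z_l)>0\}$ and $Z_d^-:=\{z_l: c(z_l)<0\}$, and rewrite $v-u$ (equivalently, rearrange \eqref{eq-alg1-continuous}) as a difference of two source terms, $f_1$ with support $S_d\cup Z_d^-$ and all positive intensities (combining $a_j$ with $-c(z_l)$ when $z_l=s_j$ and $c(z_l)<0$, handling the cancellation bookkeeping carefully so every retained intensity is strictly positive), and $f_2$ with support $Z_d^+$ and positive intensities; both carry the common signal $\lambda(t)\in C(\mathbb{R})$, nontrivial causal. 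If after this bookkeeping some coefficient vanishes, that point is simply dropped from the support. Then $u_1=u_2$ on $\partial\Omega\times\mathbb{R}$, so Theorem~\ref{prop.uniqueness} forces the two pruned source configurations to be identical as weighted point sets.

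From that identity I would read off the conclusion. Matching supports gives $S_d\setminus(\text{cancelled points})\subseteq Z_d^+$ and, since $\lambda$ is common and nontrivial, the matched intensities coincide; tracing back through the bookkeeping this says that for each original source point $s_j$ there is a sampling point $z_l=s_j$ with $c(z_l)=a_j$, hence $S_d\subseteq Z_d$ and $c(z_l)=a_j$ for $z_l\in S_d$. Conversely, any $z_l\in Z_d^+$ must appear in the matched set $S_d$ (else it would be an unmatched positive-intensity point in $f_2$ with no counterpart in $f_1$, contradicting the theorem), and any $z_l$ with $c(z_l)<0$ would likewise be an unmatched point in $f_1$; hence the only possibility for $z_l\in Z_d\setminus S_d$ is $c(z_l)=0$. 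This yields exactly the stated dichotomy. I expect the only delicate point to be the cancellation bookkeeping in the second step — making sure that when a sampling point coincides with a source point the net coefficient is split correctly so that Theorem~\ref{prop.uniqueness}'s positivity hypothesis is genuinely met — but this is routine once organized by sign.
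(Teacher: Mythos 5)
Your proof follows the same route as the paper's: both sides of \eqref{eq-alg1-continuous} are recognized as causal solutions of wave equations with point-source right-hand sides sharing the common signal $\lambda$, they agree on $\partial\Omega\times\mathbb{R}$, and Theorem~\ref{prop.uniqueness} is invoked. The paper's own proof simply asserts that ``the conclusion follows from Theorem~\ref{prop.uniqueness}'' without addressing the fact that the $c(z_l)$ may be zero or negative (violating the theorem's positivity hypothesis); your sign-pruning bookkeeping is precisely the detail needed to make that application legitimate, so your version is if anything more complete than the published one.
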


\begin{proof}
Notice that
\begin{equation}\label{realsolution1}
u(x,t)=\sum\limits_{j=1}\limits^{M}a_jG(x,t;s_j)*\lambda(t)
\end{equation}
is the unique causal solution of the wave equation \eqref{waveequation3}. Meanwhile, the wave field
\begin{equation*}
u'(x,t)=\sum\limits_{l=1}\limits^{N_z}c(z_l)G(x,t;z_l)*\lambda(t), \quad \quad x\in\mathbb{R}^3,\,t\in\mathbb{R}
\end{equation*}
is a causal solution of
\begin{equation*}
c^{-2}\partial_{tt}u(x,t)-\Delta u(x,t)=\lambda(t)\sum\limits_{l=1}\limits^{N_z}c(z_l)\delta(x-z_l), \quad\quad x\in\mathbb{R}^3,\;t\in\mathbb{R}.
\end{equation*}

Moreover, \eqref{eq-alg1-continuous} implies
\begin{equation*}
u'(x,t)=u(x,t),\quad \quad x\in\partial \Omega,\,t\in\mathbb{R}.
\end{equation*}
That is, $f_1(x,t)=\lambda(t)\sum\limits_{j=1}\limits^{M}a_j\delta(x-s_j)$ and $f_2(x,t)=\lambda(t)\sum\limits_{l=1}\limits^{N_z}c(z_l)\delta(x-z_l)$ are two solutions of the same inverse source problem. Then the conclusion follows from Theorem \ref{prop.uniqueness}.
\end{proof}

\begin{Remark}
It is a strong hypothesis that the chosen sampling points $z_l\in\Omega$ and the constants $c(z_l)$ solves \eqref{eq-alg1-continuous} for $x\in\partial \Omega$, $t\in\mathbb{R}$. Nevertheless, the numerical experiments in Section \ref{Sec_numerical} show the effectiveness of the MMFS1 even if the hypothesis is not satisfied.
\end{Remark}

The MMFS1 to solve the inverse source problem (P1) is shown in Algorithm $\textup{\RNum{1}}$. The numerical application of Algorithm $\textup{\RNum{1}}$ can be seen in Section \ref{Sec_numerical}.

\begin{table}[!bhtp]
  \small
  \centering
  \begin{tabular}{ll}
    \toprule
    \multicolumn{2}{c}{\textbf{Algorithm \RNum{1}.}~ MMFS1 to reconstruct stationary point sources}\\
    \midrule
    \textbf{Step 1}~ & Choose a convex region $\Omega$, a signal function $\lambda(t)$, an integer $M$ and the locations \\
     & $s_j~(j=1,\ldots,M)$ of the point sources. Collect the wave data $u(x_i,t_k)$ for the \\
     & sensing points $x_i\in\partial \Omega~(i=1,\ldots,N_x)$ and the discrete time steps $t_k\in [0,T]~$\\
     & $(k=1,\ldots,N_T)$, where $T$ is a chosen terminal time.\\
    \textbf{Step 2}~ & Choose a sampling region $D\subset\Omega$ such that $s_j\in D$ and $D\cap\partial \Omega=\varnothing$. Select a grid \\
    & of sampling points $z_l~(l=1,\ldots,N_z)$ in $D$. Compute $c(z_l)$ from \\
    & ~~~~~~~~$\sum\limits_{l=1}\limits^{N_z}c(z_l)(G*\lambda)(x_i,t_k;z_l)=u(x_i,t_k),\quad i=1,\ldots,N_x,\;k=1,\ldots,N_T$ \\
    & using the conjugate gradient method.  \\
    \textbf{Step 3}~ & Mesh $c(z_l)$ on the sampling grid. The locations of the point sources are given by \\
    & the locations of $z_l$ for which $c(z_l)$ are local maximum values.\\
    \bottomrule
  \end{tabular}
\end{table}

\section{Reconstruction of a moving point source}

In this section, the inverse source problem with a moving point source is considered. The wave equation is
\begin{equation}\label{waveequation2}
c^{-2}\partial_{tt}u(x,t)-\Delta u(x,t)=\lambda(t)\delta(x-s(t)),\quad\quad x\in\mathbb{R}^3,\,t\in [0,T],
\end{equation}
where $T>0$, $s:[0,T]\rightarrow \Omega$ signifies the smooth trajectory of the moving point source. Denote by
$$v(t)=\frac{\mathrm{d}s(t)}{\mathrm{d}t},\quad\quad t\in (0,T)$$
the instaneous velocity of the point source. Again, $\lambda(t)$ is causal and the initial condition follows from the causality.

The inverse source problem (P2) is: Determine the trajectory $s(t)$ of the moving point source in \eqref{waveequation2} from the measurement data
\begin{equation}\label{boundary2}
u(x,t), \quad\quad  x\in\partial \Omega,\,t\in [0,T].
\end{equation}

The MMFS1 is feasible to reconstruct stationary point sources. However, for a moving point source, the location of the point source changes over time. Thus the coefficients $c(z_l)$ in the MMFS1 should also depend on the time variable. Therefore, consider a new expansion
\begin{equation}\label{MFS2}
u(x,t)=\sum\limits_{l=1}\limits^{N_z}c(t;z_l)G(x,t;z_l)*\lambda(t),\quad \quad t\in [0,T],
\end{equation}
where $z_l\in\Omega$ are the sampling points and $c(\cdot;z_l)$ are unknown functions depending on $z_l$. The second modified method of fundamental solutions (MMFS2) is based on \eqref{MFS2}. The algorithm of MMFS2 is similar to Algorithm $\textup{\RNum{1}}$ except that the new expansion \eqref{MFS2} is employed and $c(t_k;z_l),\, l=1,\ldots,N_z$ should be computed respectively for each time step $t_k$, $k=1,\ldots,N_T$.

Notice that there is only one point source in this case. If $|v|=0$, we have $s(t_k)\equiv s_0$ for some $s_0\in\Omega$. Then Proposition \ref{prop.algorithm1} implies
\begin{equation*}
c(t_k,z_l)=
\begin{cases}
1,\quad\quad &z_l=s_0,\\
0,\quad\quad &\text{otherwise}.
\end{cases}
\end{equation*}
Then we expect that $G(x,t;s(t_k))*\lambda(t)$ is the approximation of $u(x,t_k)$ when $|v|$ is small. On this basis, define the indicator function
\begin{equation}\label{indicator}
I(z,t)=\left\|u_0(x,t;s(t))-G(x,t;z)*\lambda(t)\right\|_{\partial \Omega}^{-1},\quad\quad z\in\Omega,\,t\in [0,T],
\end{equation}
where $\|\cdot\|_{\partial \Omega}$ is the $L^2(\partial \Omega)$ norm with respect to $x$. We have the following theorem concerning the indicator function \eqref{indicator}.

\begin{Theorem}\label{prop.algorithm2}
Let $\Omega\subset \mathbb{R}^3$ be a bounded convex open region. Assume that $d_{\Omega}:=\sup\limits_{x,y\in\Omega}|x-y|\ll c$, $|v|\ll c$ and $\lambda,s\in C^1[0,T]$. Let $u_0(x,t)$ be the causal solution of the wave equation \eqref{waveequation2}. For any fixed $t\in[0,T]$, the indicator function \eqref{indicator} satisfies
$$I(z,t)\gg 1\quad \textup{when}\;z\rightarrow s(t).$$
\end{Theorem}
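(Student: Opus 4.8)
The plan is to show that the difference $u_0(x,t;s(t)) - G(x,t;z)*\lambda(t)$, measured in the $L^2(\partial\Omega)$ norm, tends to $0$ as $z\to s(t)$, which forces its reciprocal $I(z,t)$ to blow up. The starting observation is that, by Section 1 of the excerpt, the causal solution of \eqref{waveequation2} admits the representation
\begin{equation*}
u_0(x,t) = \int_0^t \frac{\delta\bigl(t-\sigma-c^{-1}|x-s(\sigma)|\bigr)}{4\pi|x-s(\sigma)|}\,\lambda(\sigma)\,\mathrm{d}\sigma,
\end{equation*}
i.e. a retarded-potential-type integral along the trajectory, whereas $G(x,t;z)*\lambda(t) = \lambda(t-c^{-1}|x-z|)/(4\pi|x-z|)$ is the same object with the moving point frozen at $z$. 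So I would first rewrite the difference as the gap between these two expressions and split it into (a) the discrepancy coming from $|x-s(\sigma)|$ versus $|x-z|$ in the amplitude $1/(4\pi|\cdot|)$ and in the retarded argument, and (b) the discrepancy coming from $s(\sigma)$ moving while the surrogate stays at the single point $z$.

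Next I would exploit the two smallness hypotheses. Because $d_\Omega \ll c$, the travel times $c^{-1}|x-s(\sigma)|$ are uniformly small, so the support of the retarded delta in $\sigma$ is confined to a short interval near $\sigma = t - c^{-1}|x-s(t)|$; and because $|v|\ll c$ with $s\in C^1$, on that short interval $s(\sigma)$ stays within a distance $O(|v|\,d_\Omega/c)$ of $s(t)$. I would then perform the $\sigma$-integration (changing variables via the zero of the phase $\sigma\mapsto \sigma + c^{-1}|x-s(\sigma)| - t$, whose derivative is $1 + c^{-1}\,v(\sigma)\cdot\widehat{(x-s(\sigma))}$, invertible since $|v|\ll c$) to obtain $u_0(x,t) = \lambda(\tau_x)/(4\pi|x-s(\tau_x)| \cdot (1+O(|v|/c)))$ for the retarded time $\tau_x$. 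Comparing with $\lambda(t-c^{-1}|x-z|)/(4\pi|x-z|)$ and using the $C^1$ regularity of $\lambda$ and $s$, each term is seen to be controlled by $|z - s(t)|$ plus the fixed small quantities $d_\Omega/c$ and $|v|/c$; in the limit $z\to s(t)$ the $|z-s(t)|$ contribution vanishes. Integrating the pointwise bound over $x\in\partial\Omega$ (a bounded surface) keeps the $L^2$ norm $\to 0$, hence $I(z,t)\to\infty$.

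The main obstacle is handling the singularity: as $z\to s(t)$ both $u_0(x,\cdot)$ and the surrogate have an amplitude factor that is singular precisely at $x = s(t)\in\partial\Omega$ only if the trajectory touches the boundary, but in the relevant regime $s(t)\in\Omega$ with positive distance to $\partial\Omega$, so $|x-z|$ and $|x-s(\sigma)|$ stay bounded below and the amplitudes are smooth in $x$ on $\partial\Omega$ — I would make this non-degeneracy explicit early on. The more delicate point is that the statement is really an \emph{asymptotic} claim ($I\gg 1$), not a clean limit, because the hypotheses only give $d_\Omega/c \ll 1$ and $|v|/c \ll 1$ rather than $=0$; so strictly speaking the difference norm is $O(|z-s(t)|) + O(d_\Omega/c) + O(|v|/c)$, and one argues that for $z$ sufficiently close to $s(t)$ the norm is dominated by the small fixed quantities, making $I(z,t)$ large of order $(d_\Omega/c + |v|/c)^{-1}$. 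I would phrase the conclusion in that quantitative-asymptotic sense, consistent with the paper's use of the $\gg$ symbol, rather than claiming $I\to+\infty$ exactly.
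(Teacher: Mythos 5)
Your proposal follows essentially the same route as the paper's proof: both start from the explicit retarded-time (Li\'enard--Wiechert-type) solution of \eqref{waveequation2}, approximate it by the frozen-source expression $G(x,t;s(t))*\lambda(t)$ with an error controlled by $d_{\Omega}/c$ and $|v|/c$, and then use the smoothness of $G(x,t;z)*\lambda(t)$ in $z$ to conclude. The paper quotes the explicit solution and the approximation step from the literature, whereas you derive both explicitly and correctly flag that the conclusion is the asymptotic statement $I\gg 1$ (of order $(d_{\Omega}/c+|v|/c)^{-1}$) rather than a genuine limit $I\to\infty$; this is a faithful, more detailed version of the same argument.
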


\begin{proof}
Note that when $|v(t)|<c$ for $t\in(0,T)$, the explicit solution to equation \eqref{waveequation2} is given by (refer to \cite{Nakaguchi2012An})
\begin{equation}\label{realsolution2}
u_0(x,t)=\frac{\lambda(\tau)}{4\pi|x-s(\tau)| \left(1-\frac{v(\tau)\cdot (x-s(\tau)) }{c|x-s(\tau)|}\right)},
\end{equation}
where the retarded time $\tau$ satisfies $t-\tau=c^{-1}|x-s(\tau)|$.

Under the assumptions $d_{\Omega}\ll c$, $|v|\ll c$ and $\lambda,s\in C^1[0,T]$, we assert that
\begin{equation*}
G(x,t;s(t))*\lambda(t)=\frac{\lambda(t-c^{-1}|x-s(t)|)}{4\pi|x-s(t)|}
\end{equation*}
is an approximation of the solution \eqref{realsolution2}. The proof of a similar conclusion can be seen in \cite{wang2017mathematical}. Though we use an arbitrary causal signal function $\lambda(t)$ instead of the time-harmonic signal $\lambda(t)=\sin(\omega_0 t)$ for some $\omega_0 >0$, and the function $\lambda(t-c^{-1}|x-s(t)|)$ is occupied in $G(x,t;s(t))*\lambda(t)$ in this paper instead of $\lambda(t)$, a similar discussion implies
\begin{equation*}
u_0(x,t)=G(x,t;s(t))*\lambda(t)+O(\varepsilon(t)),\quad\quad x\in\partial \Omega,\,t\in [0,T]
\end{equation*}
with some $0<\varepsilon(t)\ll 1$.

Then the smoothness of the function $G(x,t;z)*\lambda(t)$ with respect to $z$ implies the conclusion.
\end{proof}

Then the MMFS2 is in fact equivalent to a simple sampling method. The simplified scheme is shown in Algorithm $\textup{\RNum{2}}$. The numerical implement of Algorithm $\textup{\RNum{2}}$ is shown in Section \ref{Sec_numerical}.

\begin{table}[!htbp]
  \small
  \centering
  \begin{tabular}{ll}
    \toprule
    \multicolumn{2}{c}{\textbf{Algorithm \RNum{2}.}~ The simplified scheme to reconstruct a moving point source }\\
    \midrule
    \textbf{Step 1}~ & Choose a convex region $\Omega$, a signal function $\lambda(t)$ and the trajectory $s(t),t\in[0,T]$ \\
     & of the moving point source. Collect the wave data $u(x_i,t_k)$ on the sensing points \\
     & $x_i\in\partial \Omega~(i=1,\ldots,N_x)$ and the discrete time steps $t_k~(k=1,\ldots,N_T)$. \\
    \textbf{Step 2}~ & Choose a sampling region $D\subset\Omega$ such that $s(t)\subset D$ and $D\cap\partial \Omega=\varnothing$. Select a grid \\
    & of sampling points $z_l~(l=1,\ldots,N_z)$ in $D$. For each time step $t_k$, compute \\
    & ~~~~~~~~~~~~$I(z_l,t_k)=\left(\sum\limits_{i=1}^{N_x}\big((G*\lambda)(x_i,t_k;z_l)-u(x_i,t_k) \big)^2\right)^{-1/2}.$ \\
    \textbf{Step 3}~ & For each time step $t_k$, the location $s(t_k)$ of the point source is approximated by \\
    & the location of $z_l$ for which $I(z_l,t_k)$ is the global maximum value. \\
    \bottomrule
  \end{tabular}
\end{table}

\section{Numerical examples}\label{Sec_numerical}

In this section, we consider the numerical implementation of the proposed algorithms. The radiated field is collected for $t\in [0,T]$, where $T$ is the terminal time. The time discretization is
$$t_k=k\dfrac{T}{N_T},\quad k=0,1,\ldots,N_T,$$
where $N_T\in\mathbb{N}^*$. Random noises are added to the data with
$$u_{\epsilon}=(1+\epsilon r)u,$$
where $\epsilon>0$ is the noise level and $r$ are uniformly distributed random numbers in $[-1,1]$.

In all the experiments, the signal function $\lambda(t)$ is chosen as
$$\lambda(t)=
\begin{cases}
0,\quad\quad&t<0,\\
\sin(10t)\mathrm{e}^{-0.3(t-3)^2},\quad\quad&t\geq 0.
\end{cases}$$
The signal function $\lambda(t)$ and its Fourier spectrum can be seen in Figure \ref{fig002}.

\begin{figure}
\centering
\subfigure[]{
\includegraphics[scale=0.36]{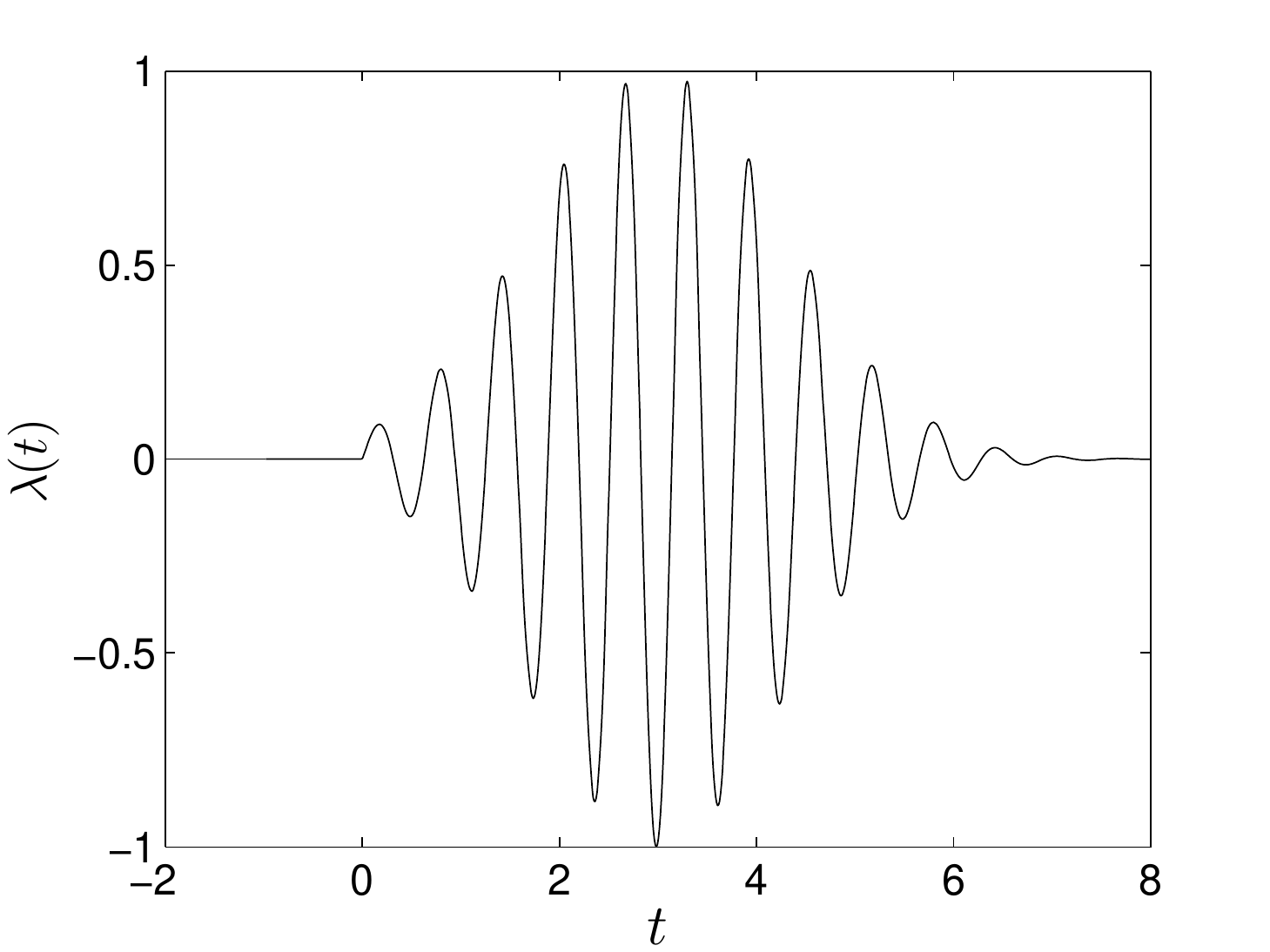}
}
\subfigure[]{
\includegraphics[scale=0.36]{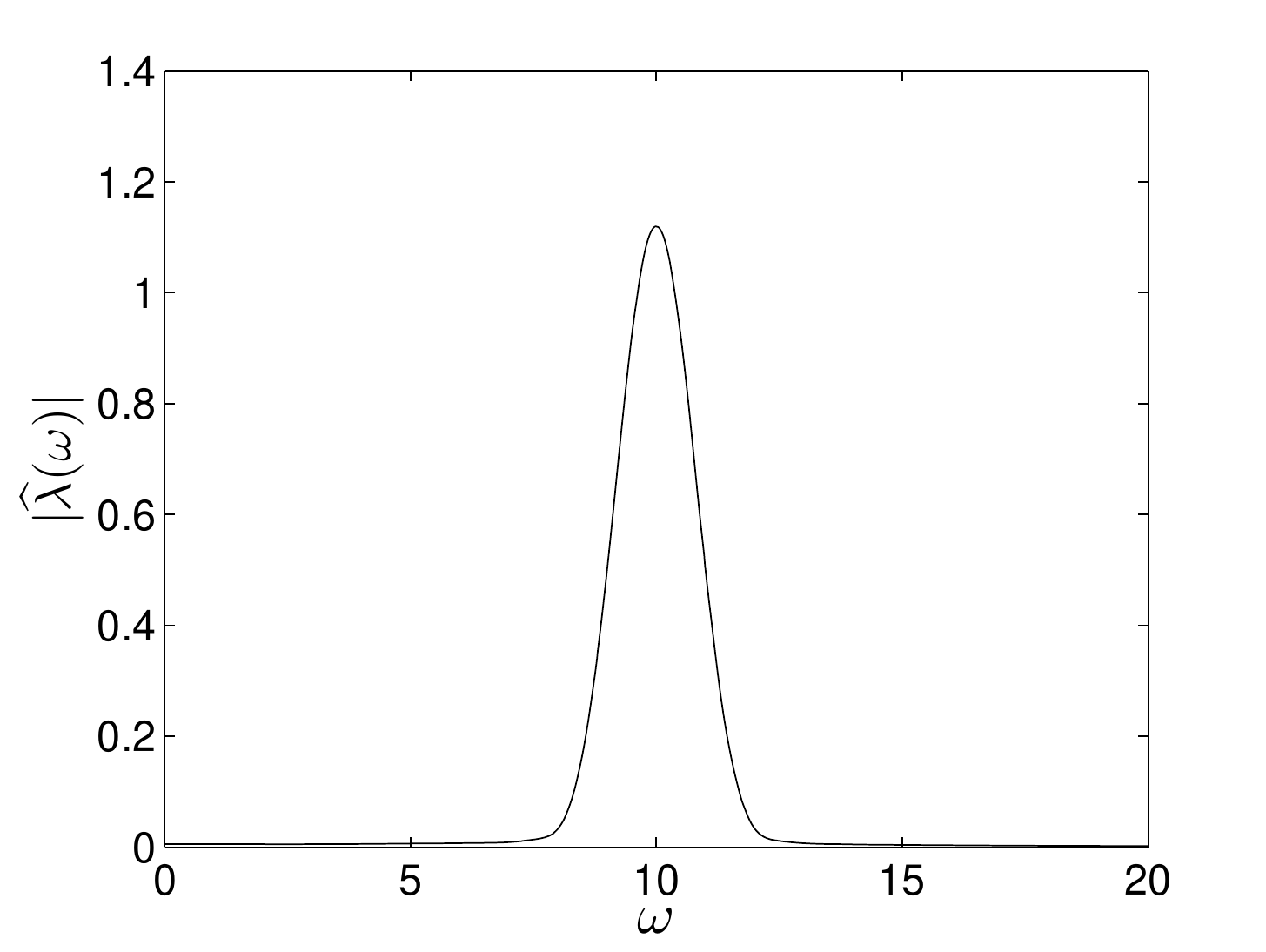}
}
\caption{(a) The pulse function $\lambda(t)=\sin (10t) \mathrm{e}^{-0.3(t-3)^2}$. (b) The Fourier spectrum $|\hat{\lambda}(\omega)|$. }\label{fig002}
\end{figure}

\subsection[]{Reconstruction of multiple static point sources}\label{subsec-numer1}

Algorithm $\text{\RNum{1}}$ is employed for the reconstruction of multiple static point sources. The synthetic data $u(x,t)$, $x\in\partial \Omega$, $t\in[0,T]$ is given by the analytic solution \eqref{realsolution1}. We choose $c=1$, $T=15$ and $N_T=64$ in this subsection.

\vspace{2mm}

\noindent\textbf{Example 1.} In this example, the reconstruction of the stationary point sources located at $(0,-1,1)$, $(0,1,-1)$, $(1,-1,0)$ and $(1,0,-1)$ with the same intensity is considered. The sensing points are chosen as
\begin{equation}\label{sensors}
x(i,j)=(5\sin\varphi_i\cos\theta_j,\,5\sin\varphi_i\sin\theta_j,\,5\cos\varphi_i)
\end{equation}
with $\varphi_i=\frac{2i-1}{16}\pi$, $i=1,2,\ldots,8$ and $\theta_j=\frac{j}{4}\pi$, $j=0,1,\ldots,7$.
The sampling points are $21\times 21\times 21$ uniform discrete points in $[-2,2]\times[-2,2]\times[-2,2]$. The reconstructions are shown in Figure \ref{fig005}.

\begin{Remark}
To facilitate the 3D visualization, we add 2D projections in some of the 3D figures in this paper.
\end{Remark}

\begin{figure}
\centering
\subfigure[]{
\includegraphics[scale=0.30]{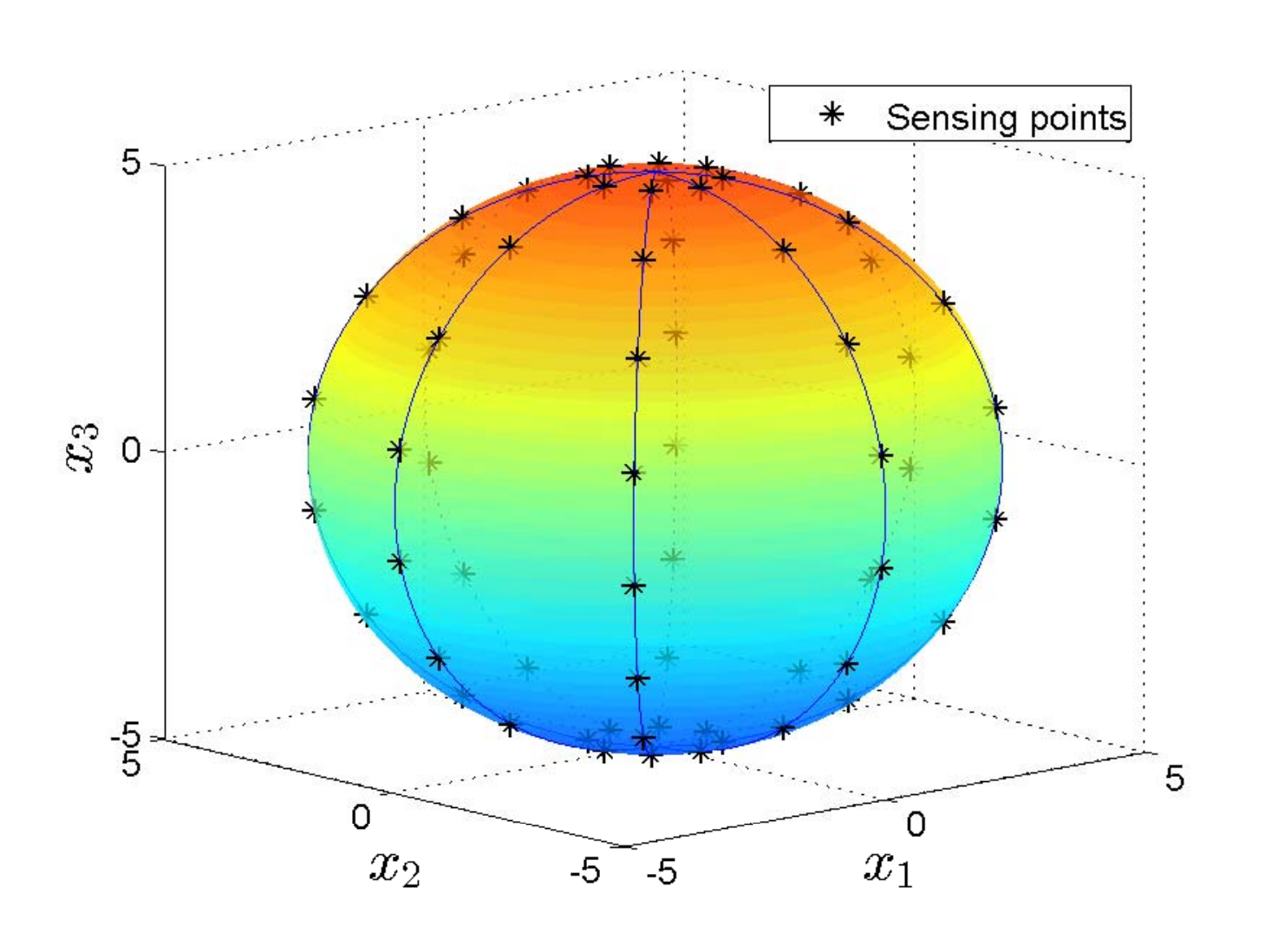}
}
\subfigure[]{
\includegraphics[scale=0.265]{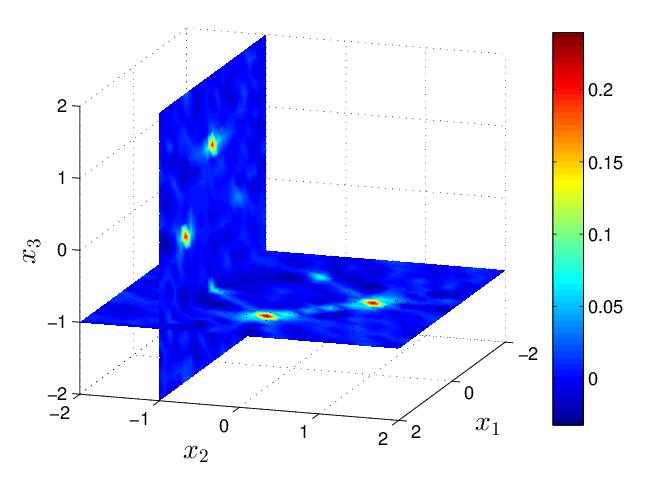}
}

\subfigure[]{
\includegraphics[scale=0.265]{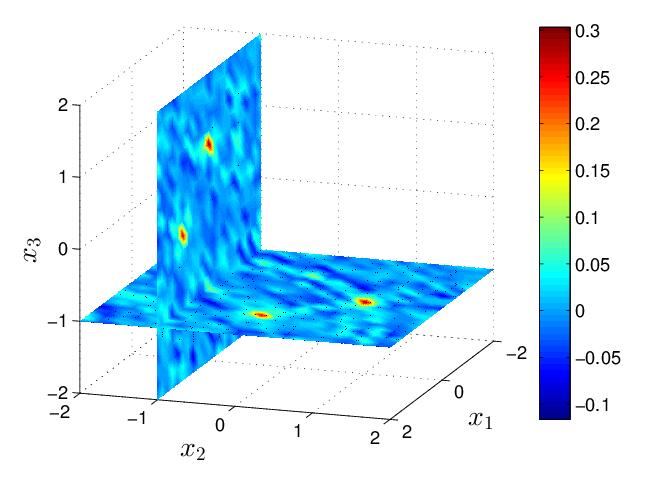}
}
\subfigure[]{
\includegraphics[scale=0.30]{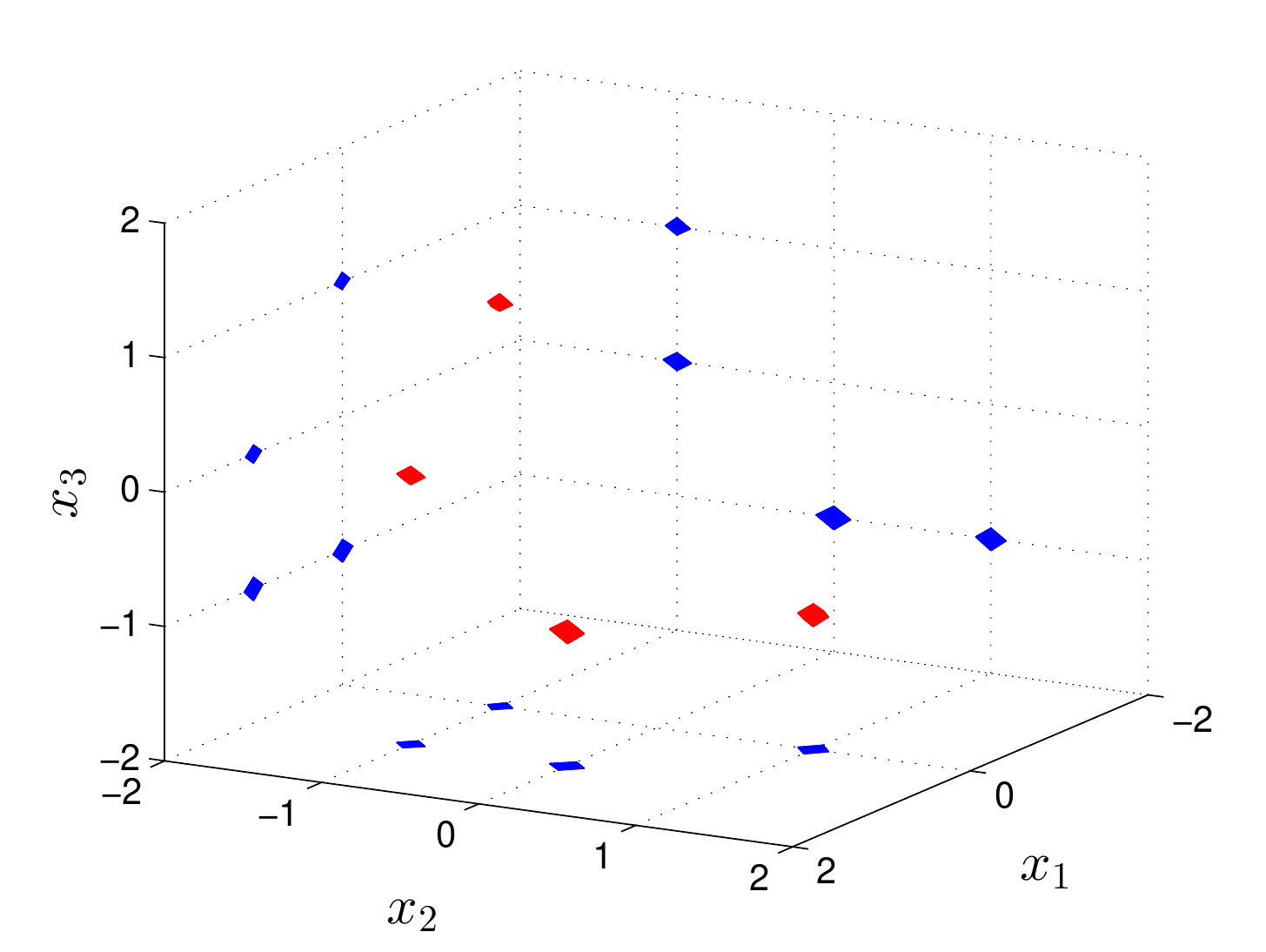}
}
\caption{Reconstruction of 4 stationary point sources with the same intensity (Example 1). (a) Location of the sensors. (b) Reconstruction of the point sources, $\epsilon=1\%$. (c) Reconstruction of the point sources, $\epsilon=5\%$. (d) The isosurface of $c(z_l)=0.7\max\{c(z_l)\}$, $\epsilon=5\%$. } \label{fig005}
\end{figure}

\noindent\textbf{Example 2.} We investigate the reconstruction of point sources with different intensities in this example. The source points are chosen as $(0,1,0)$, $(-1,-1,0)$, $(-1,-1.2,0)$, $(1,0.5,0)$, $(-0.5,0.5,0)$ and $(1.5,-1,0)$ with relative intensities $2$, $3$, $2$, $4$, $3$ and $3$, respectively. The sampling points are chosen as $23\times 23\times 23$ uniform discrete points in $[-2,2]\times[-2,2]\times[-2,2]$. The sensors are chosen as all the sensors in \eqref{sensors}, the left half of the sensors with $i=1,2,\ldots,8$, $j=0,1,\ldots,3$ and the upper half with $i=1,2,\ldots,4$, $j=0,1,\ldots,7$, respectively in three cases. The reconstructions are shown in Figure \ref{fig003}.

\begin{figure}
\centering
\subfigure[]{
\includegraphics[scale=0.33]{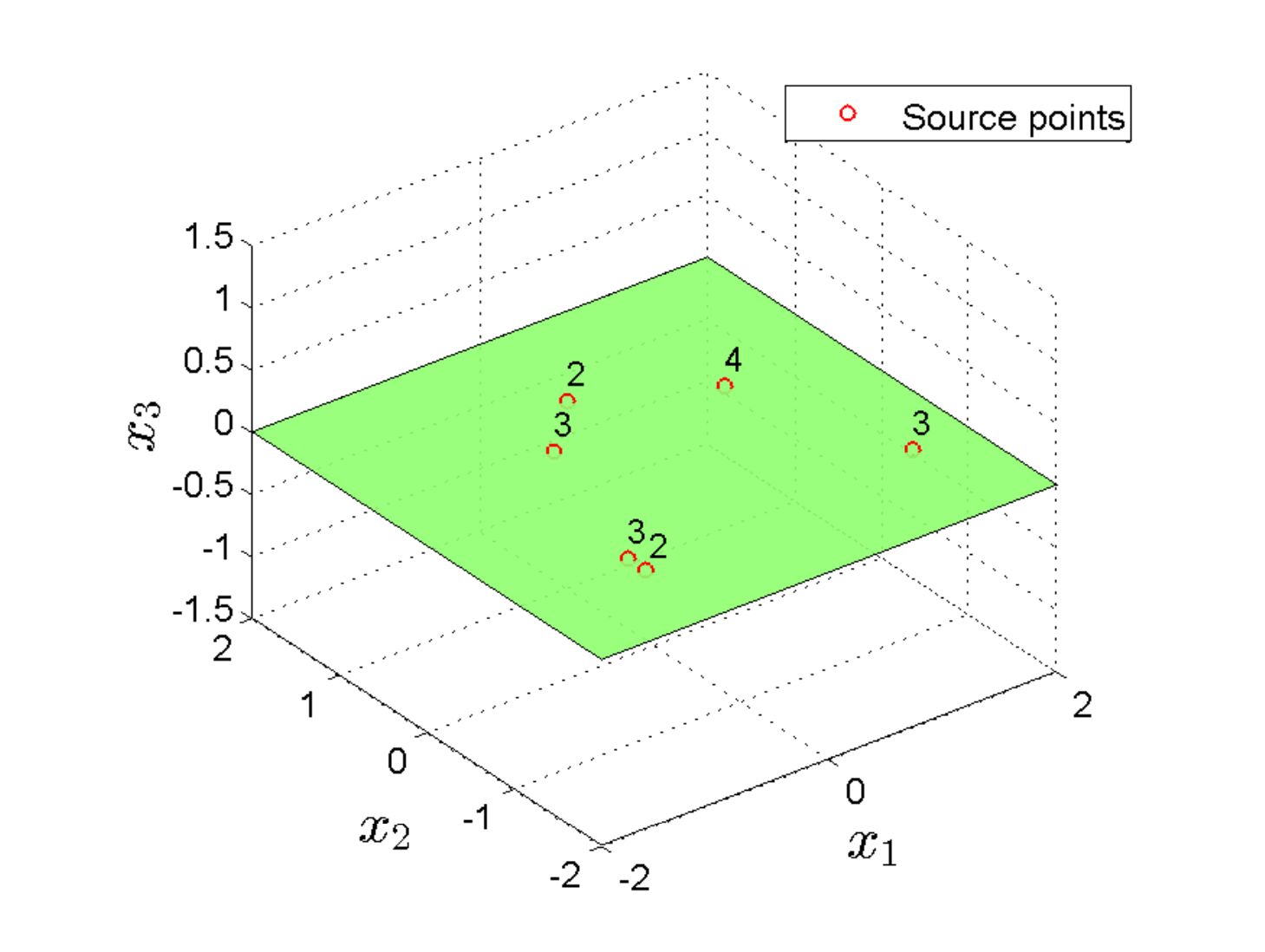}
}
\subfigure[]{
\includegraphics[scale=0.33]{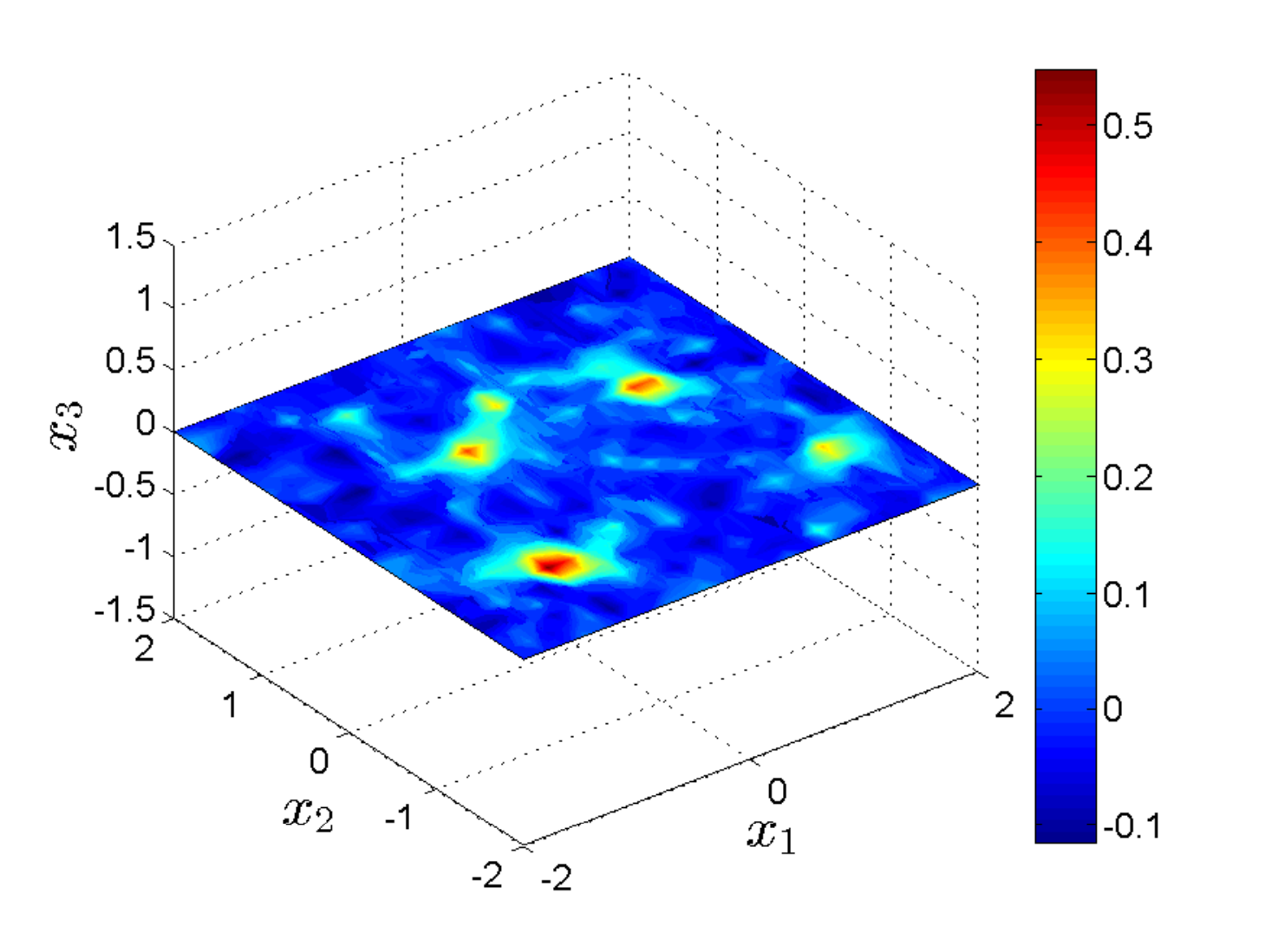}
}

\subfigure[]{
\includegraphics[scale=0.33]{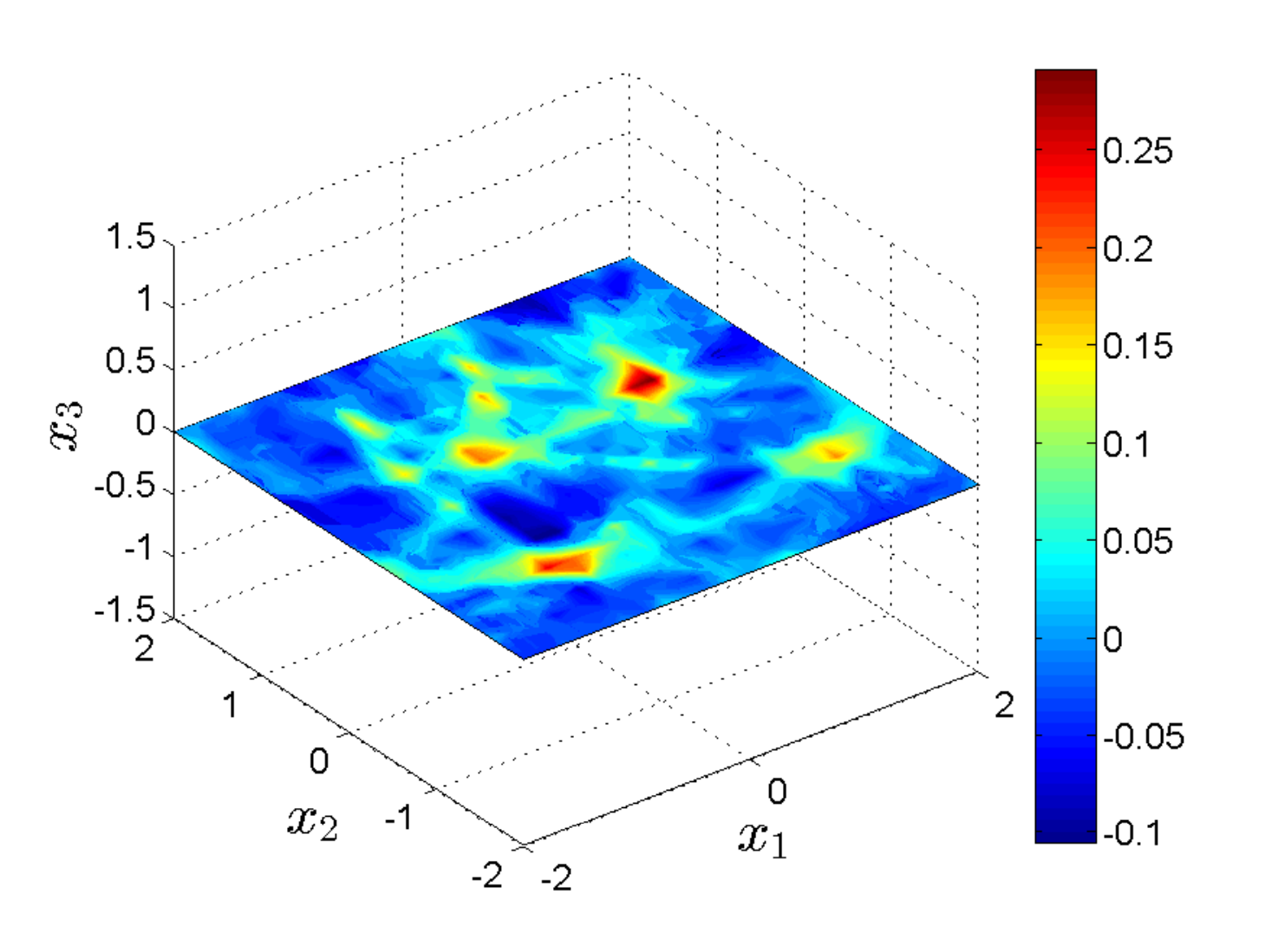}
}
\subfigure[]{
\includegraphics[scale=0.33]{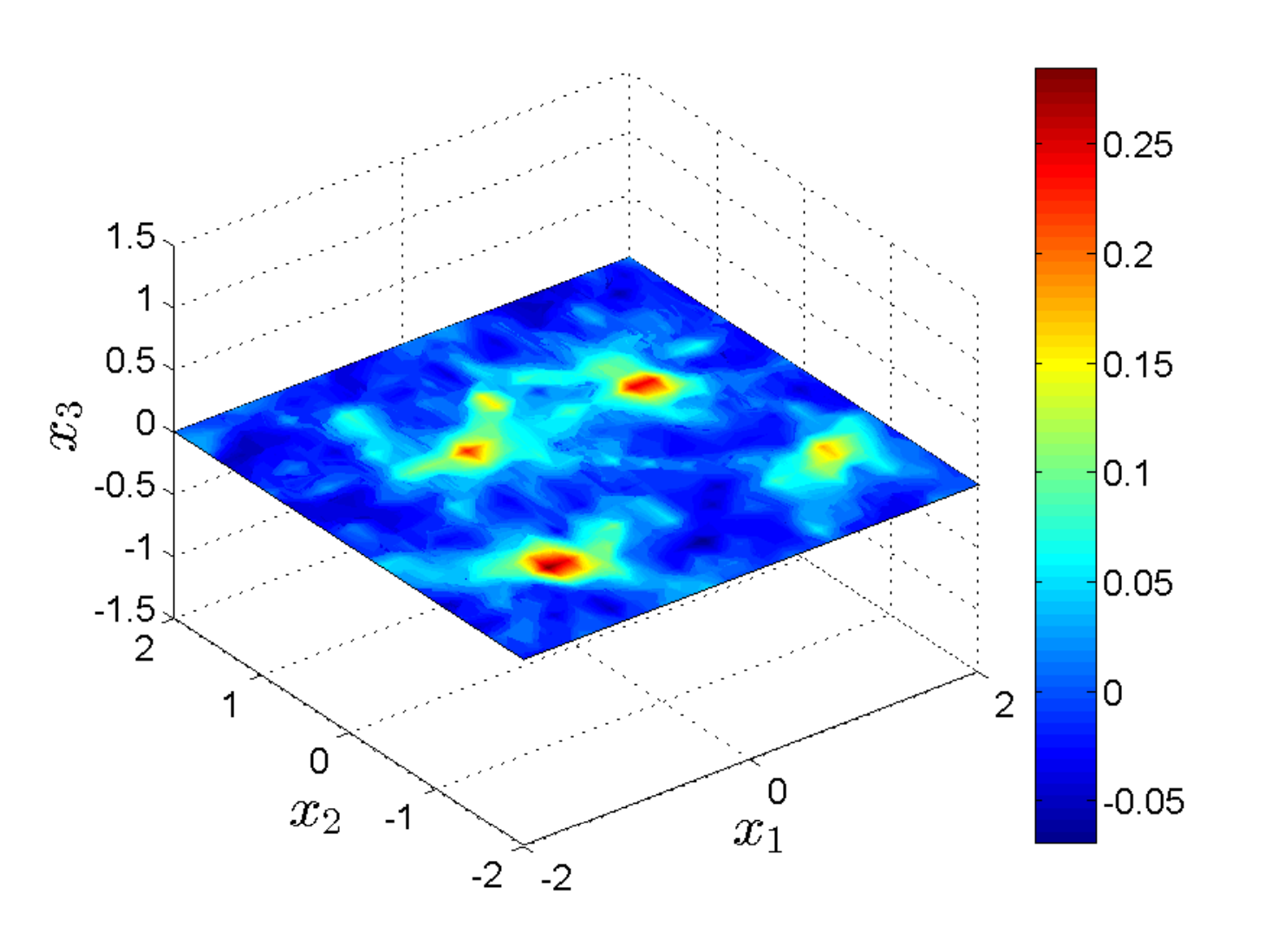}
}
\caption{Reconstruction of 6 stationary point sources with different intensities (Example 2). (a) Sketch of the example. (b) Reconstruction with all the sensors, $\epsilon=1\%$. (c) Reconstruction with left half of the sensors, $\epsilon=1\%$. (d) Reconstruction with upper half of the sensors, $\epsilon=1\%$. } \label{fig003}
\end{figure}

As is shown in Figure \ref{fig003}, the proposed method is feasible to reconstruct point sources with different intensities. The specific data of the reconstruction is given by the following procedure:\vspace{2mm}\\
{\small
(1) Compute $c(z_l)$ and save the data as $C(i,j,k),\,i,j,k=1,...,23$, corresponding to the sampling points $z(i,j,k)=\left(-2+\frac{2(i-1)}{11},-2+\frac{2(j-1)}{11},-2+\frac{2(k-1)}{11}\right)$, respectively. Denote $n=1$. \\
(2) Find a global maximum of $C(i,j,k)$ and the corresponding maximum point $z(i_n,j_n,k_n)$. The intensity of the point $z(i_n,j_n,k_n)$ is given by
$$T(i_n,j_n,k_n)=\sum_{\substack{i=i_n-1,...,i_n+1\\j=j_n-1,...,j_n+1\\k=k_n-1,...,k_n+1}}C(i,j,k).$$
(3) If $T(i_n,j_n,k_n)<1$, end the procedure.\\
(4) If $T(i_n,j_n,k_n)>1$, the corresponding point $z_l(i_n,j_n,k_n)$ is regarded as a source point with the intensity $T(i_n,j_n,k_n)$. \\
(5) Denote $C(i,j,k)=0$ for $i=i_n-1,...,i_n+1$, $j=j_n-1,...,j_n+1$ and $k=k_n-1,...,k_n+1$. Redefine $n=n+1$ and go back to step (2).
}

\vspace{2mm}

The specific data is shown in Table 1. The error of the location is mainly caused by the discretization precision of the sampling region. Since the point sources No. 2 and No. 3 are too close to each other, only one source point is reconstructed with the superimposition of the intensities.

\begin{table}[tp]
  \centering
  \fontsize{10}{14}\selectfont
  \label{table-reconstruction}
    \begin{tabular}{|c|c|c|c|c|}
    \hline
    \multirow{2}{*}{No.}&
    \multicolumn{2}{c|}{The point sources}&\multicolumn{2}{c|}{The reconstructions}\cr
    \cline{2-5}
    &\quad\quad Location\quad\quad\quad &\quad Intensity\quad\quad &\quad\quad Location\quad\quad\quad &\quad Intensity\quad\quad \cr
    \hline
    1&$(0,1,0)$&2 &$(0,0.91,0)$&2.01 \cr\hline
    2&$(-1,-1,0)$&3 &$(-1.09,-1.09,0)$&5.17\cr\hline
    3&$(-1,-1.2,0)$&2 &Null&Null\cr\hline
    4&$(1,0.5,0)$&4 &$(0.91,0.55,0)$&4.31\cr\hline
    5&$(-0.5,0.5,0)$&3 &$(-0.55,0.55,0)$&3.42\cr\hline
    6&$(1.5,-1,0)$&3 &$(1.45,-0.91,0)$&3.13\cr\hline
    \end{tabular}
  \caption{Reconstruction of the locations and intensities of the point sources.}
\end{table}

\subsection[]{Reconstruction of a moving point source}\label{subsec-numer3}

This subsection is concerned with the reconstruction of a moving point source. Numerical scheme based on Algorithm $\textup{\RNum{2}}$ is employed. The synthetic data $u(x,t)$, $x\in\partial \Omega$, $t\in[0,T]$ is given by the analytic solution \eqref{realsolution2}. We choose $c=340$ and $T=2\pi$ in this subsection.

\vspace{2mm}

\noindent\textbf{Example 3.} In this example, we consider the reconstruction of arbitrary trajectory of a moving source in $\mathbb{R}^3$. The sensors are chosen the same as that in Example 1. The sampling points are chosen as $51\times 51\times 51$ uniform discrete points in $[-3,3]\times[-3,3]\times[-3,3]$. We choose $N_T=64$ in this experiment. The reconstructions $s'(t_k)$ of the locations $s(t_k)$ for $k=1,2,...,N_T$ are considered.

The trajectories of the moving source are chosen as $s_1(t)=(2+0.3\cos 3t)(\cos t, \sin t, 0)$ and $s_2(t)=2(\sin {2t}, \cos {2t}, \frac{t}{\pi}-1)$, respectively in two cases. The reconstructions can be seen in Figure \ref{fig006} and Figure \ref{fig007}, respectively.

As is shown in Figure \ref{fig006}(b), the reconstructions $s'_1(t_k)$ is close to the trajectory $s_1(t)$ except for several discrete points. The error given by the Euclidean distance $|s_1(t_k)-s'_1(t_k)|$ at each discrete time steps $t_k$ can be seen in Figure \ref{fig006}(c). The error becomes large when the signal intensity $\lambda(t_k)$ is near zero. Therefore, the following modification is provided after the reconstruction:\vspace{2mm}\\
{\small
(1) If $|\lambda(t_1)|<10^{-4}$, redefine $s'(t_1)=2s'(t_2)-s'(t_3)$. \\
(2) If $|\lambda(t_{N_T})|<10^{-4}$, redefine $s'(t_{N_T})=2s'(t_{N_T-1})-s'(t_{N_T-2})$. \\
(3) If $|\lambda(t_k)|<10^{-4}$ for any $k=2,\ldots,N_T-1$, redefine $s'(t_k)=\frac{1}{2}(s'(t_{k-1})+s'(t_{k+1}))$.
}

\vspace{2mm}

The modified reconstruction and the corresponding error are shown in Figure \ref{fig006}(d) and Figure \ref{fig006}(e), respectively. As we can see from Figure \ref{fig006}(e), the error $|s(t_k)-s'(t_k)|$ is small at each time steps after the modification. Therefore, similar modifications are applied to all the experiments in the rest of this subsection.

The smooth reconstruction of the trajectory is given by the post-processing of the data $s'(t_k)$ by a Fourier approximation.  The truncated Fourier expansion of order $N\in\mathbb{N}^{*}$ is employed such that
$$s(t)=a_0+\sum\limits_{n=1}\limits^{N}(a_n \cos{nt}+b_n \sin{nt}),$$
where
\begin{align*}
a_0=&\frac{1}{N_T}\sum\limits_{k=1}\limits^{N_T}s'(t_k),\\
a_n=&\frac{2}{N_T}\sum\limits_{k=1}\limits^{N_T}s'(t_k)\cos{nt_k},\quad\quad n=1,2,\ldots,N,\\
b_n=&\frac{2}{N_T}\sum\limits_{k=1}\limits^{N_T}s'(t_k)\sin{nt_k},\quad\quad n=1,2,\ldots,N.
\end{align*}

Fourier expansion of order 5 is employed to get the smooth reconstruction in this example.

\begin{Remark}
An important component of the error is caused by the discretization precision of the sampling region. The error $|s(t_n)-s'(t_n)|<0.2$ coincides with the $51\times 51\times 51$ uniform discretization of the sampling region $[-3,3]\times[-3,3]\times[-3,3]$.
\end{Remark}

\begin{figure}
\centering
\subfigure[]{
\includegraphics[scale=0.31]{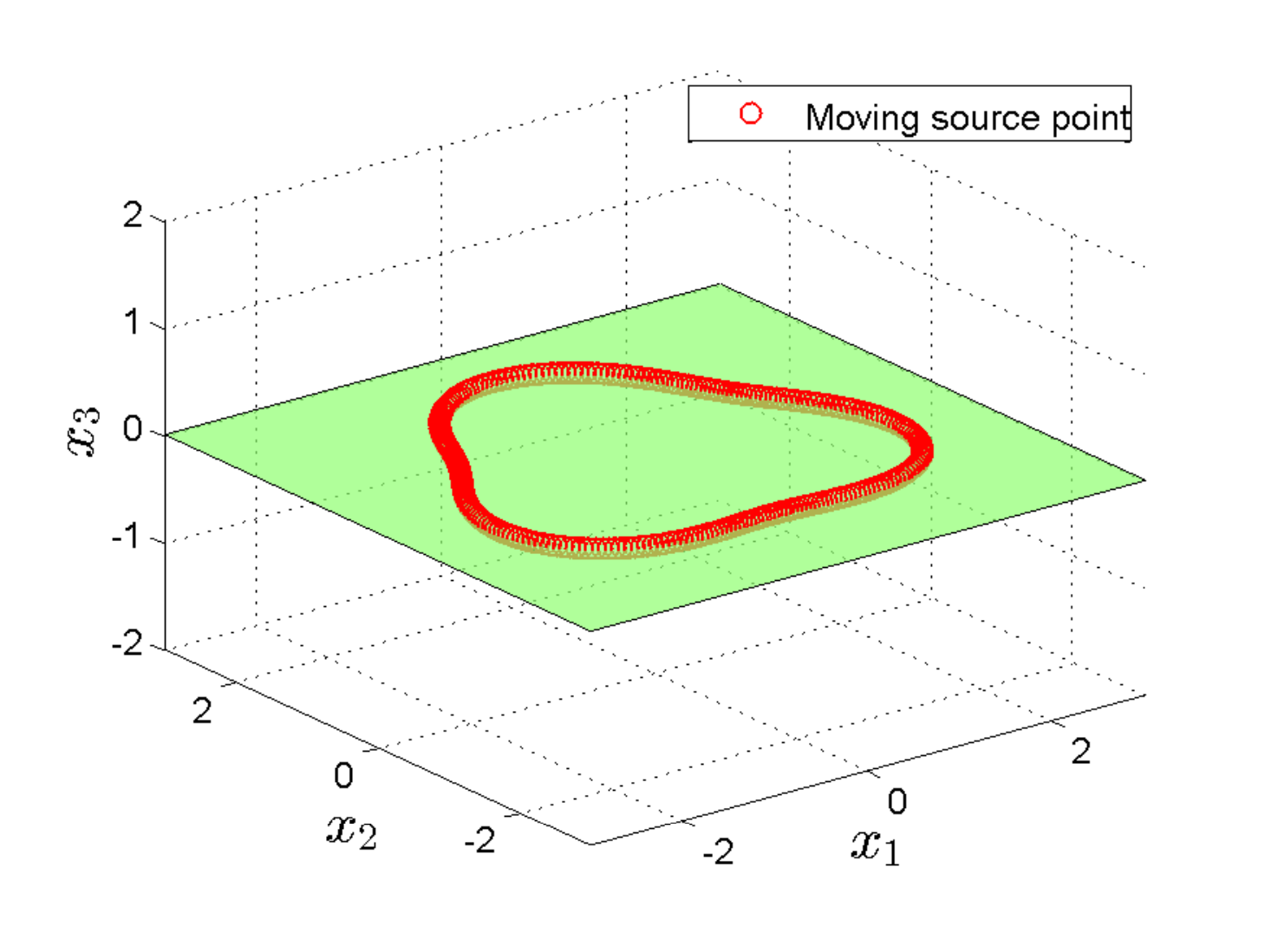}
}
\subfigure[]{
\includegraphics[scale=0.31]{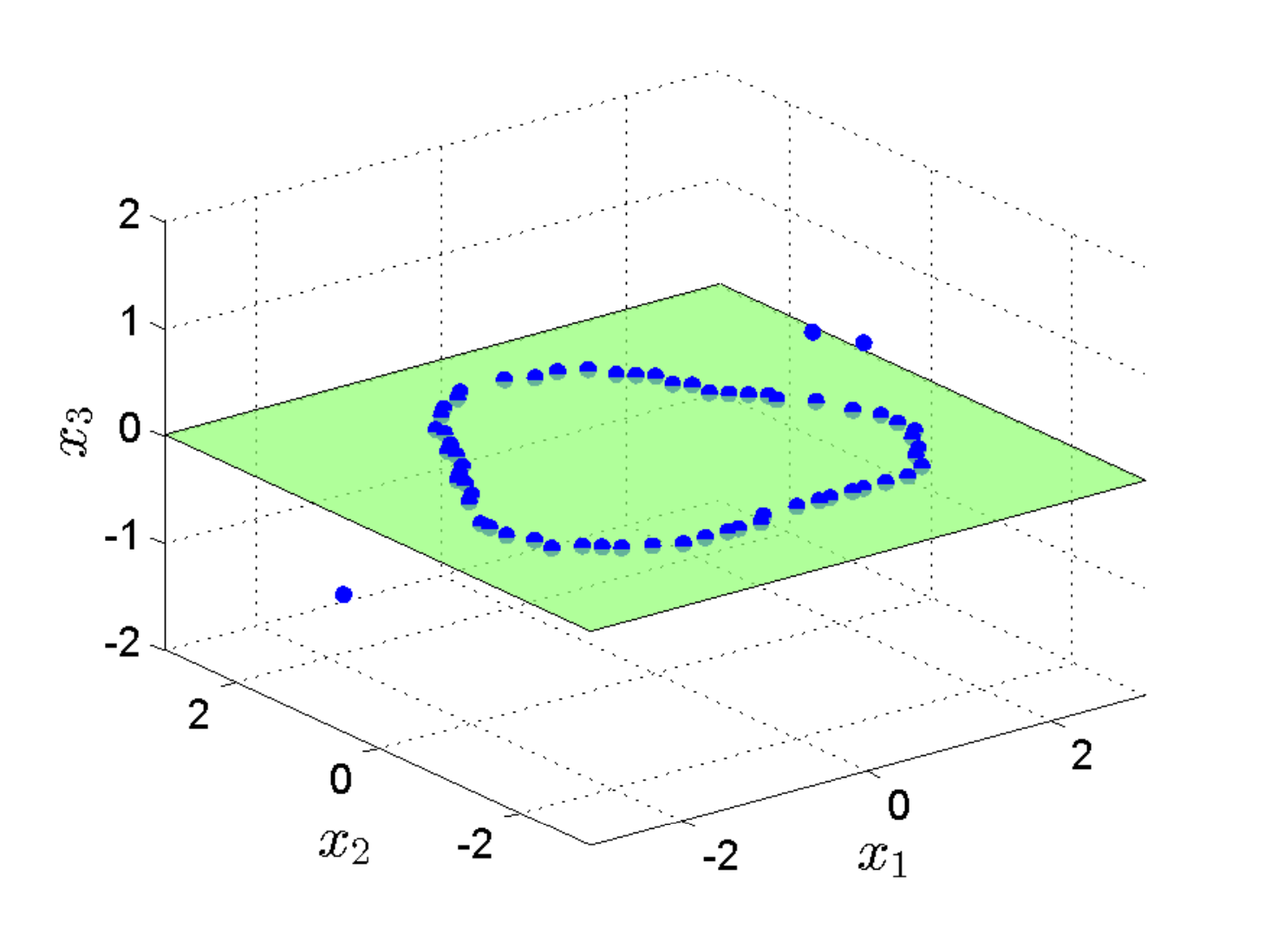}
}
\subfigure[]{
\includegraphics[scale=0.31]{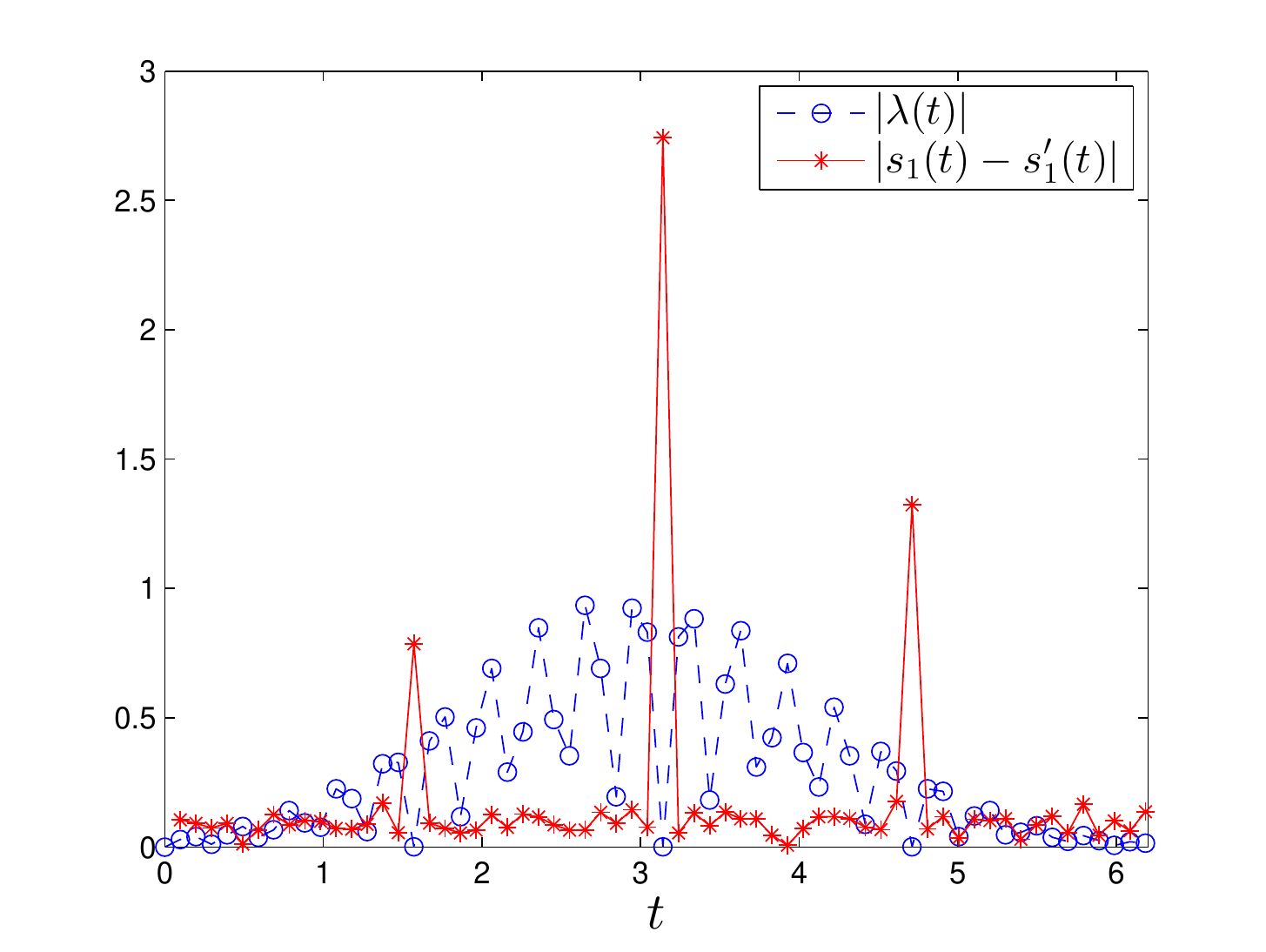}
}

\subfigure[]{
\includegraphics[scale=0.31]{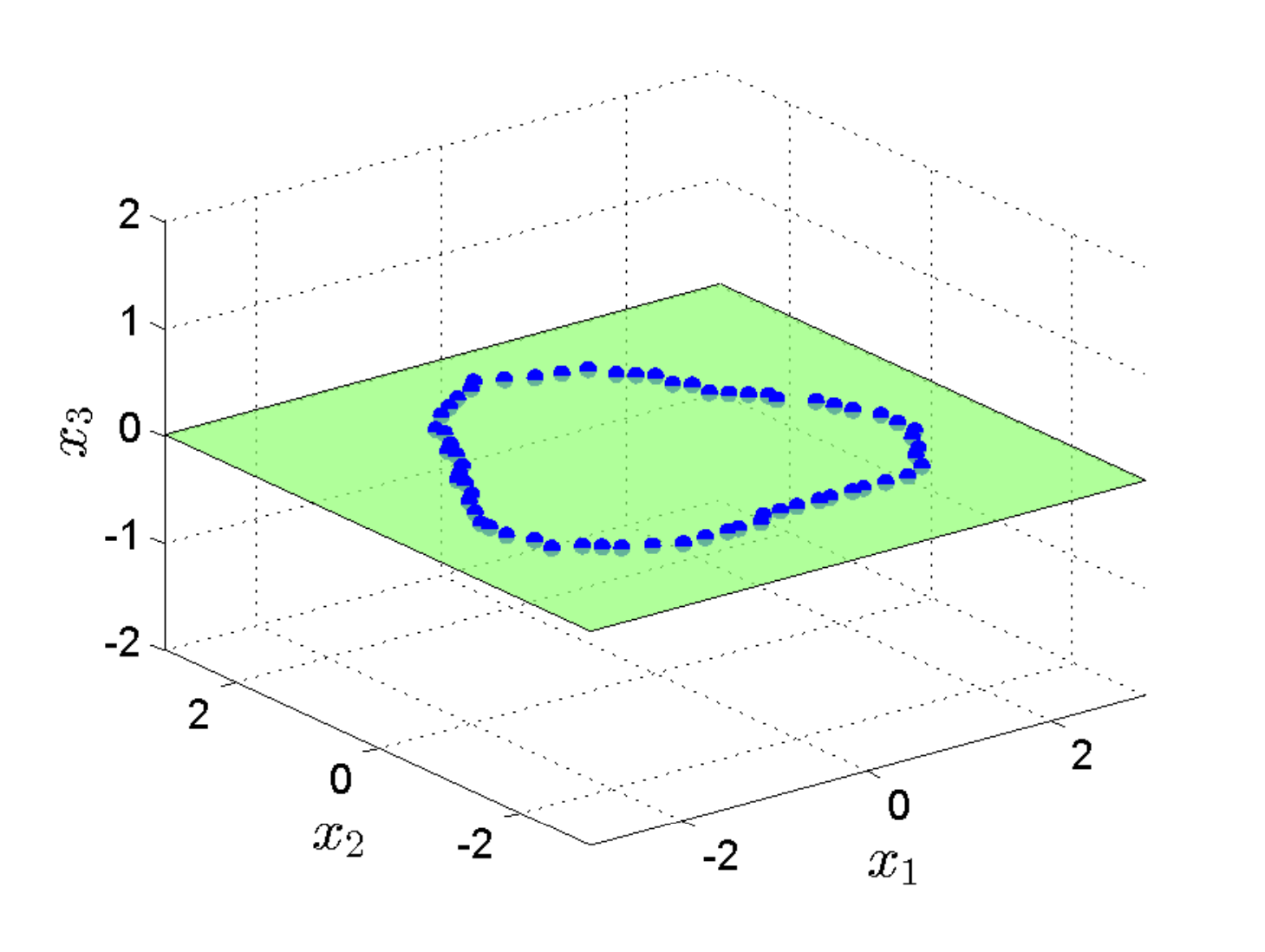}
}
\subfigure[]{
\includegraphics[scale=0.31]{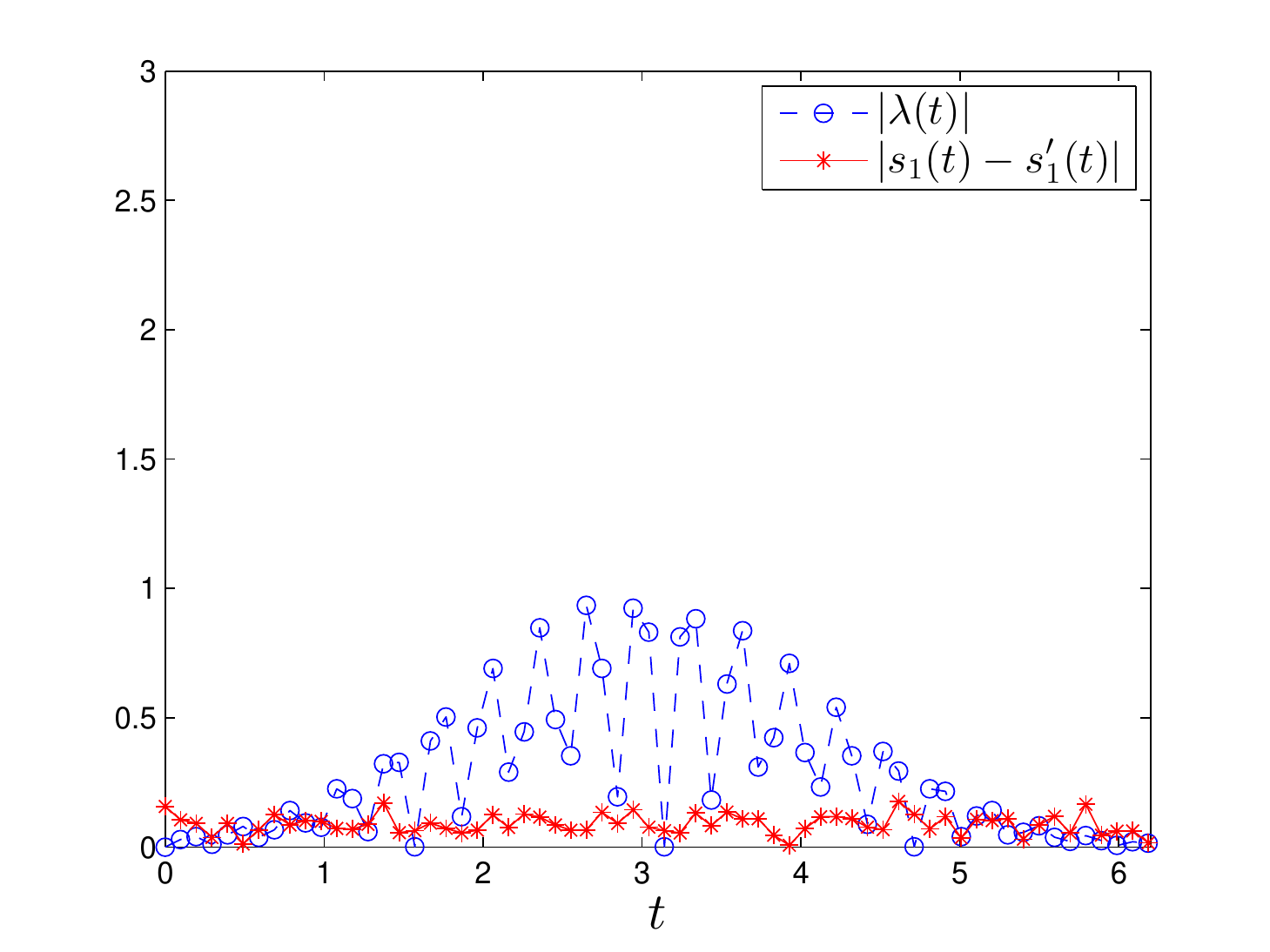}
}
\subfigure[]{
\includegraphics[scale=0.31]{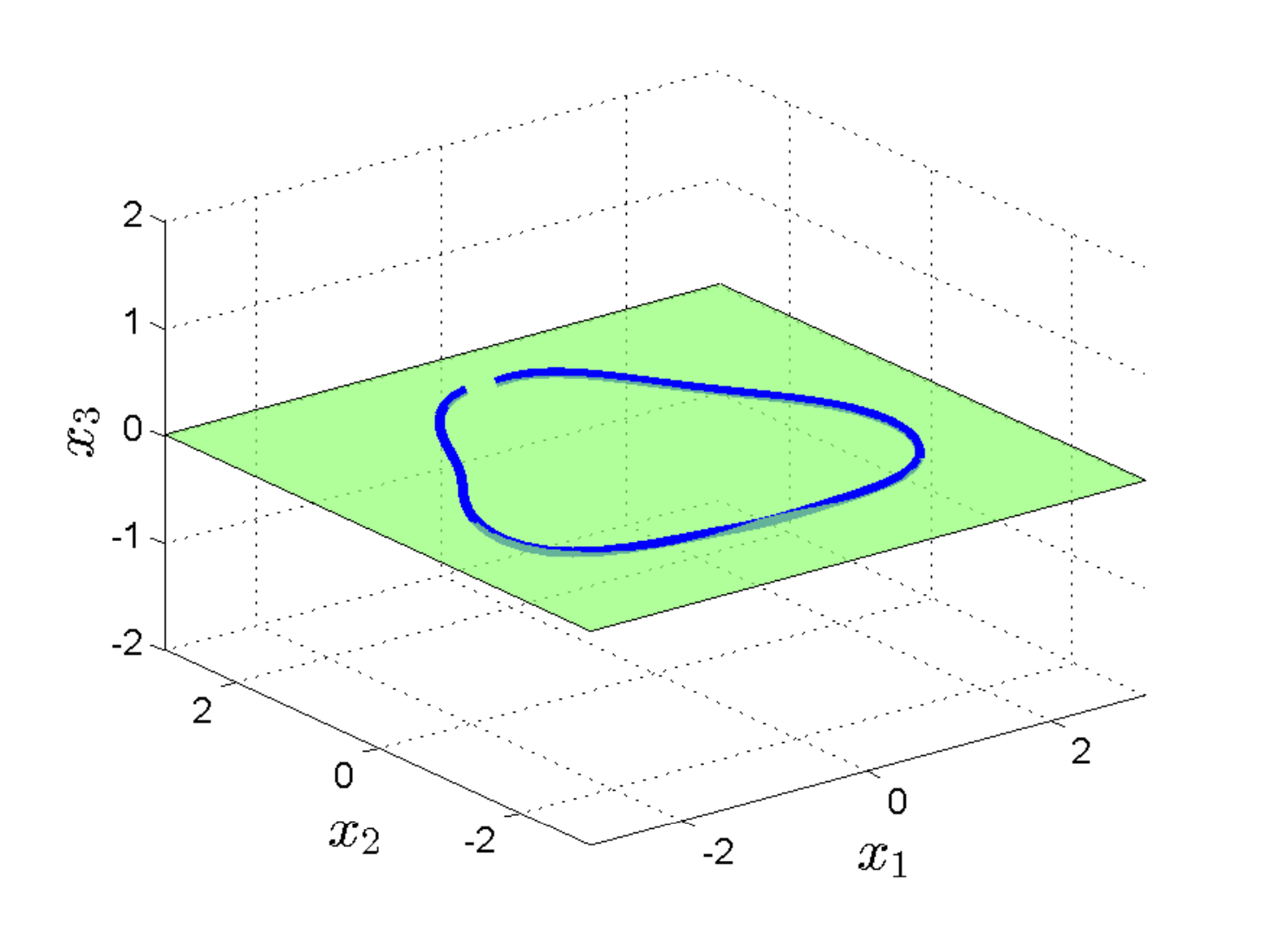}
}
\caption{Reconstruction of $s_1(t)$ in $\mathbb{R}^3$ (Example 3, Case 1). (a) Trajectory of the source point. (b) The reconstruction, $\epsilon=5\%$. (c) The error of the reconstruction. (d) The modified reconstruction, $\epsilon=5\%$. (e) The error of the modified reconstruction. (f) The smooth reconstruction with the Fourier expansion of order 5.} \label{fig006}
\end{figure}

\begin{figure}
\centering
\subfigure[]{
\includegraphics[scale=0.31]{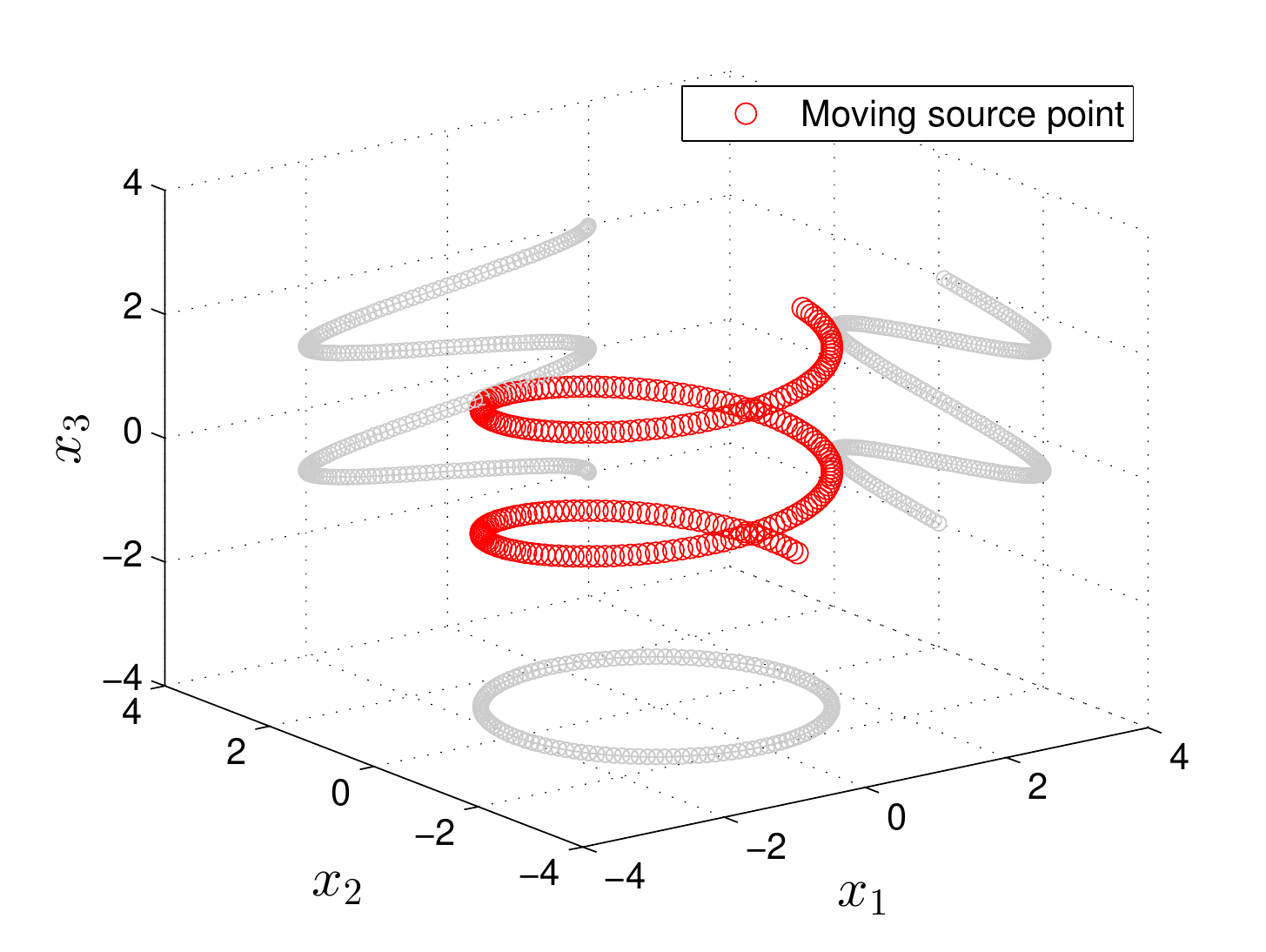}
}
\subfigure[]{
\includegraphics[scale=0.31]{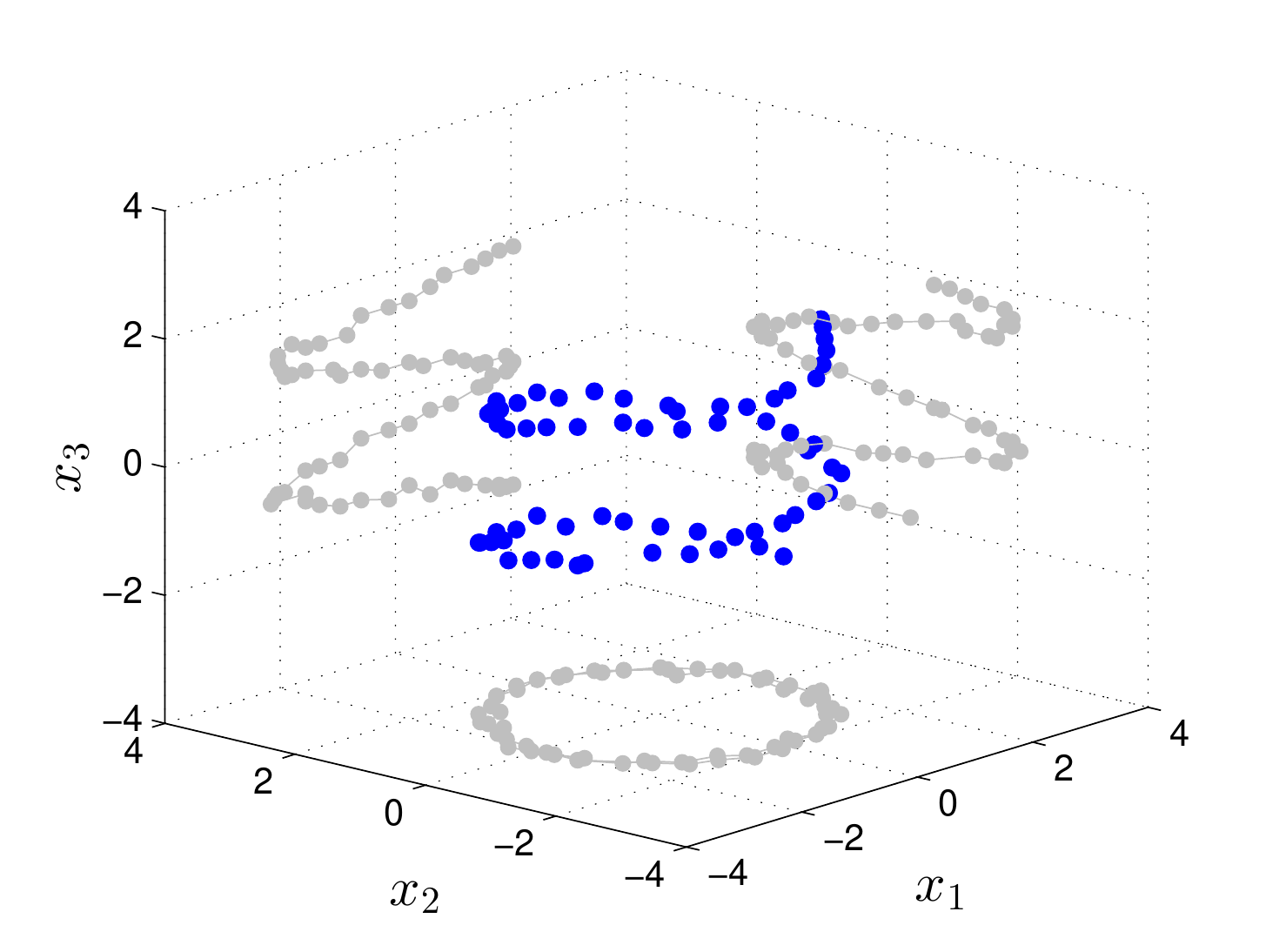}
}
\subfigure[]{
\includegraphics[scale=0.31]{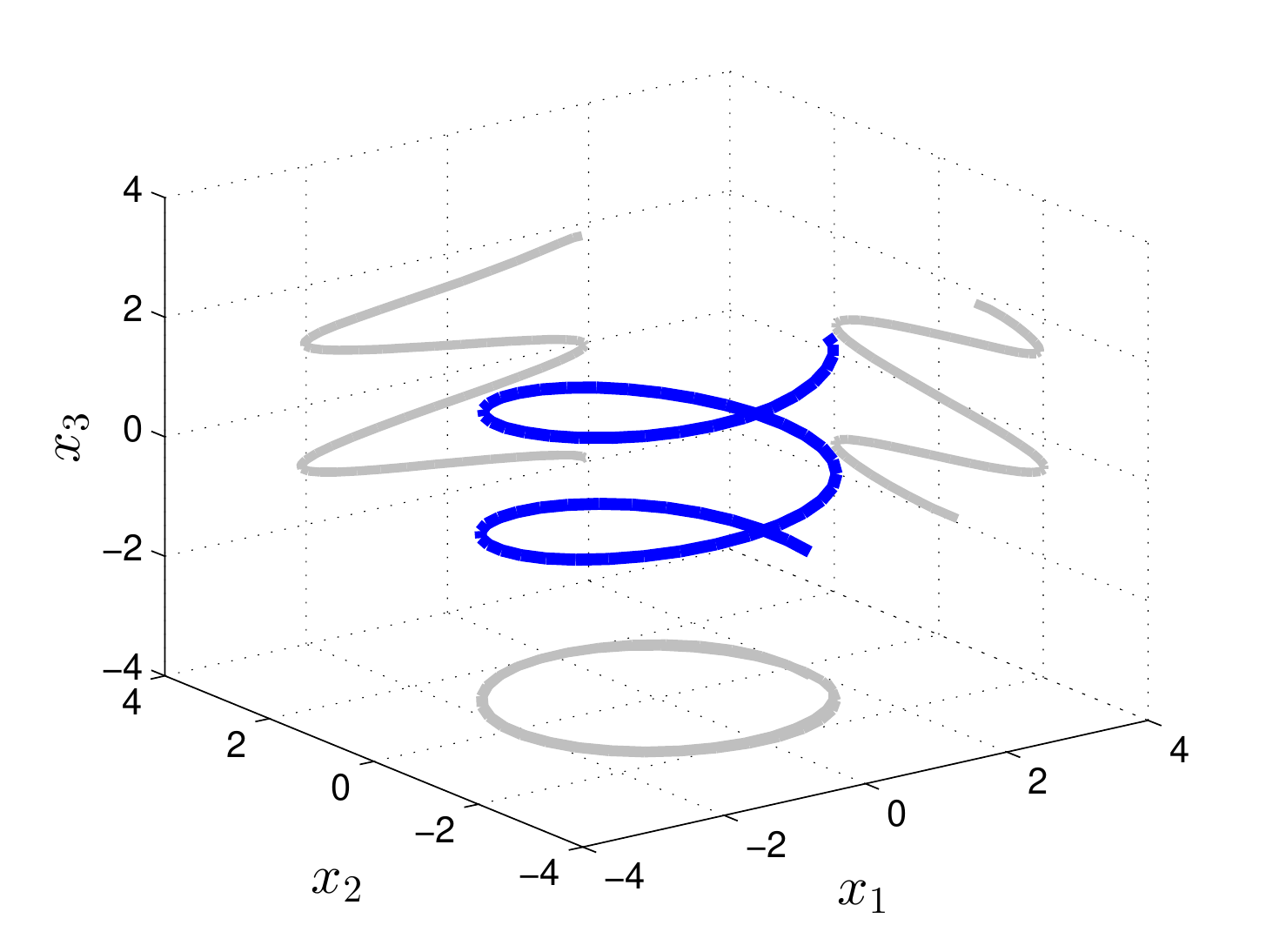}
}
\caption{Reconstruction of $s_2(t)$ in $\mathbb{R}^3$ (Example 3, Case 2). (a) Trajectory of the source point. (b) The reconstruction, $\epsilon=5\%$. (c) The smooth reconstruction given by the Fourier expansion of order 5. } \label{fig007}
\end{figure}

\noindent\textbf{Example 4.} As an addition of Example 3, the reconstruction of a handwritten Chinese character ``ai'' is considered. We choose $N_T=128$ in this experiment. The reconstructions can be seen in Figure \ref{fig008}.

The smooth reconstruction in this example is also provided by the Fourier expansion. However, the Chinese character ``ai'' has 5 strokes and can not be reconstruct with a single smooth curve. Thus the smooth reconstruction is provided respectively for each stroke. Since the point source moves faster in the gap between two strokes, we use the following strategy to provide the smooth reconstruction:\vspace{2mm}\\
{\small
(1) If $\max\left\{|s'(t_{k-1})-s'(t_{k})|,|s'(t_{k+1})-s'(t_{k})|\right\}>0.3$ for any $k=2,\ldots,N_T-1$, classify $s'(t_{k})$ as an end point of a stroke, or a point between two strokes. \\
(2) Separate the strokes of the character, and provide the smooth reconstruction of each stroke using the Fourier expansion.
}
\vspace{2mm}

As is shown in Figure \ref{fig008}, the algorithm is feasible to reconstruct the character with noise level $\epsilon=5\%$. The smooth reconstructions by the Fourier expansion with order 5 and order 3 are shown in Figure \ref{fig008}(c) and Figure \ref{fig008}(d), respectively.

\begin{Remark}
The smooth reconstruction by the Fourier expansion of order 5 indeed shows more details of the reconstruction than that of order 3. However, some of the details are caused by the noises. As is shown in Figure \ref{fig008}(c-d), the smooth reconstruction by the Fourier expansion of order 3 is better in this example.
\end{Remark}

\begin{figure}
\centering
\subfigure[]{
\includegraphics[scale=0.33]{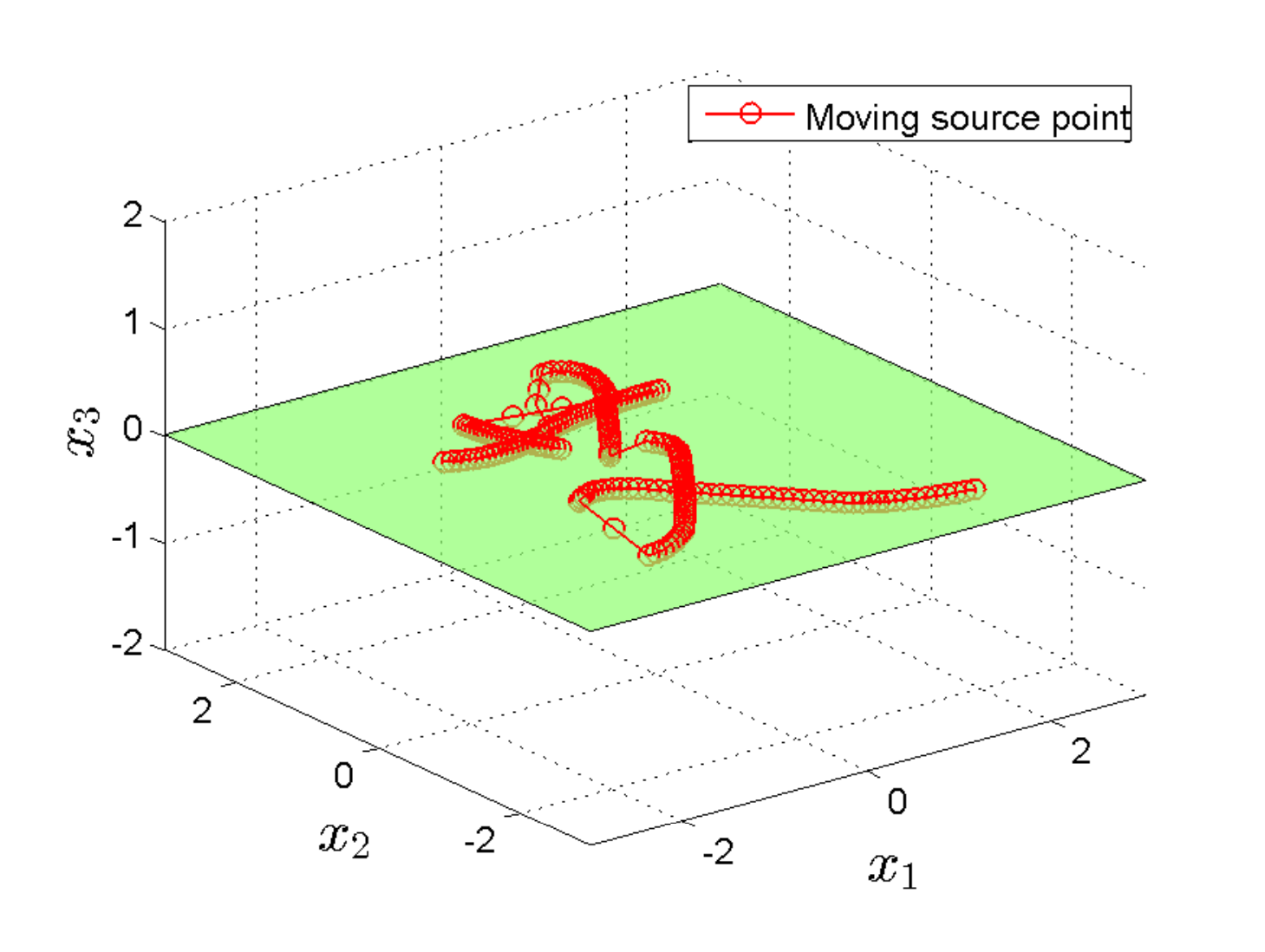}
}
\subfigure[]{
\includegraphics[scale=0.33]{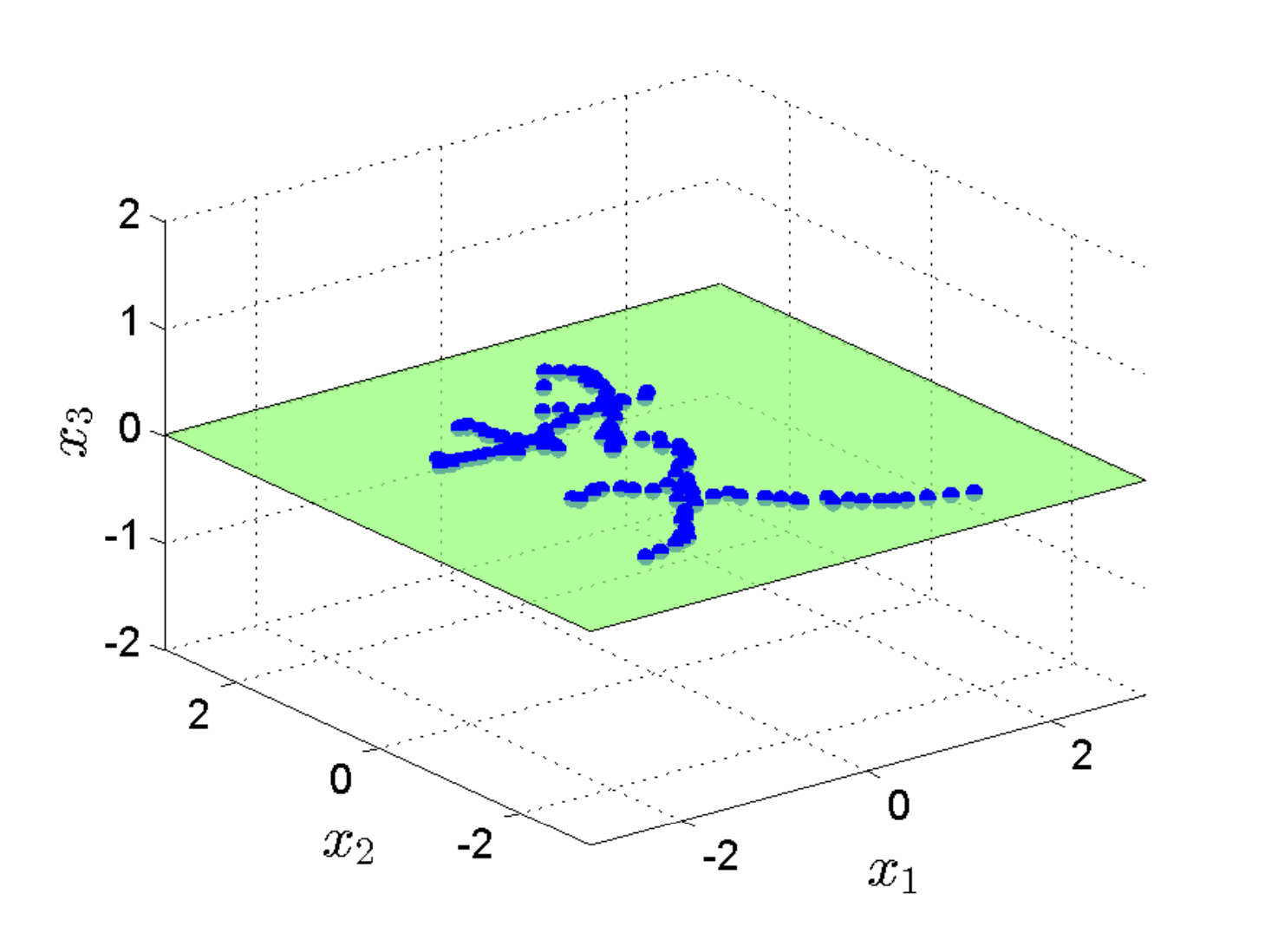}
}

\subfigure[]{
\includegraphics[scale=0.33]{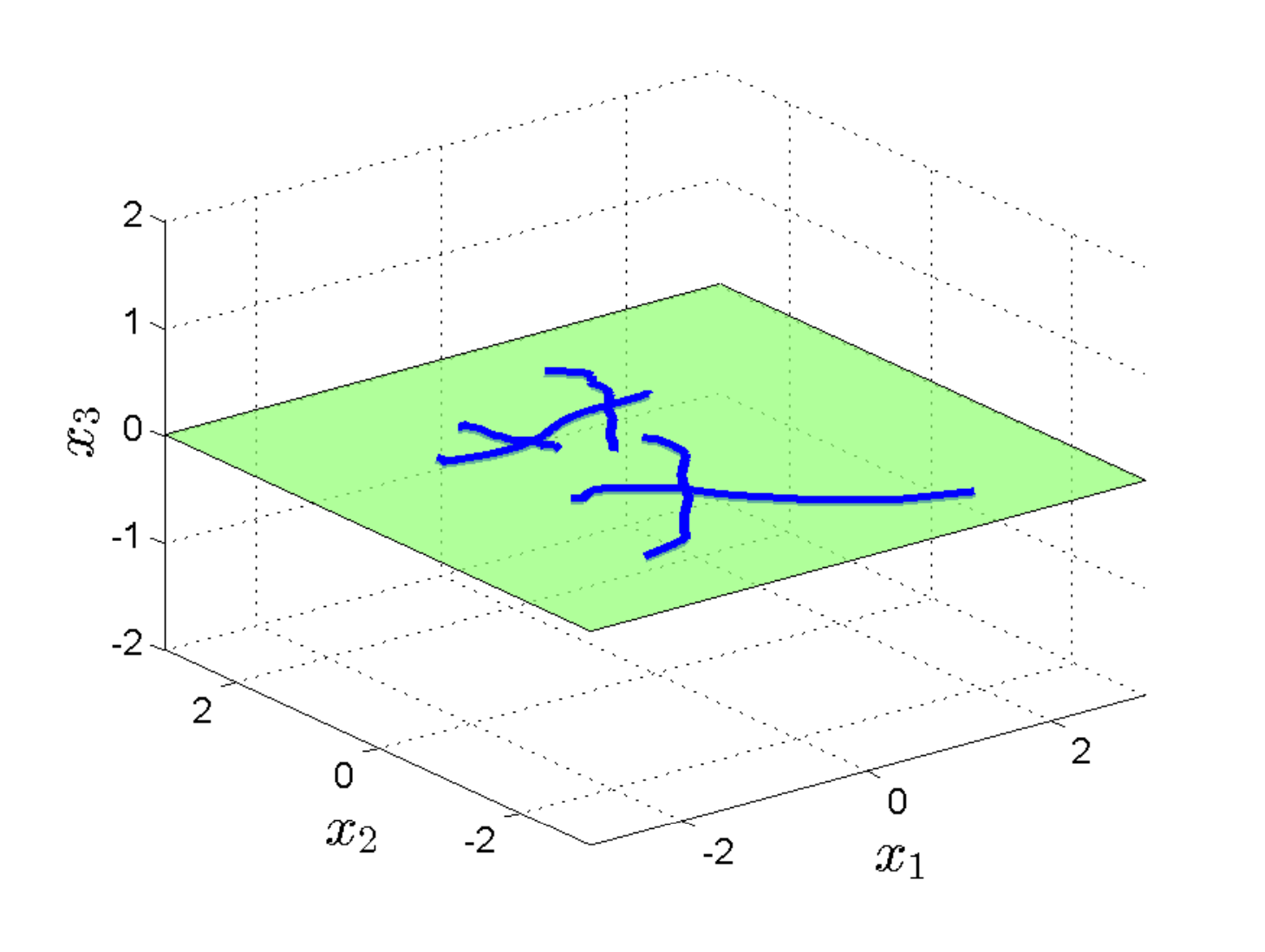}
}
\subfigure[]{
\includegraphics[scale=0.33]{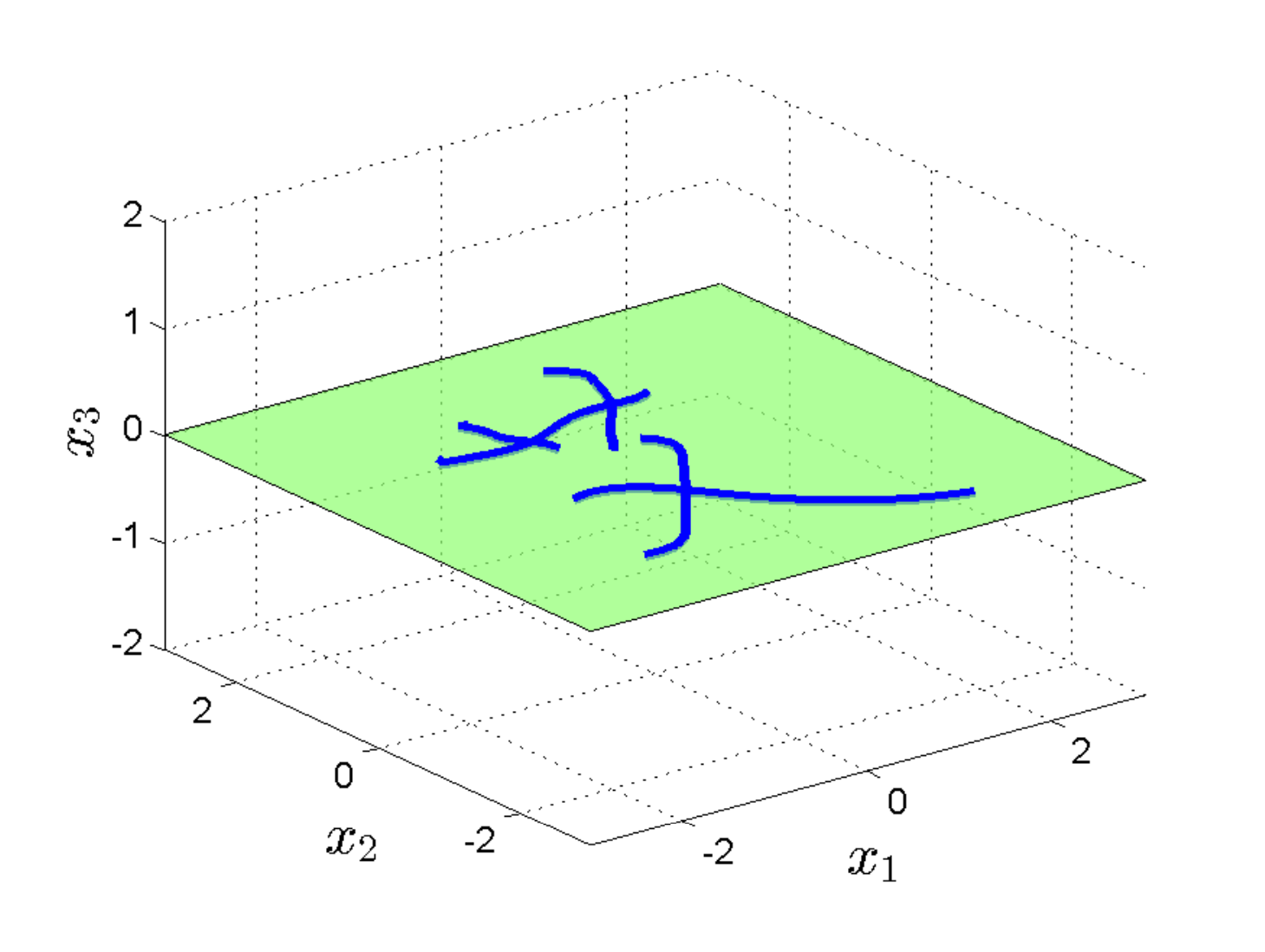}
}
\caption{Reconstruction of a handwritten Chinese character ``ai'' (Example 4). (a) Trajectory of the point source. (b) The reconstruction, $\epsilon=5\%$. (c) The smooth reconstruction with the Fourier expansion of order 5. (d) The smooth reconstruction with the Fourier expansion of order 3. } \label{fig008}
\end{figure}

\noindent\textbf{Example 5.} In this example, we are concerned about the reconstruction of the trajectory $s_3(t)=2\left(\frac{t}{\pi}\sin {2t}, \frac{t}{\pi}\cos {2t}, \frac{t}{\pi}-1\right)$ using 4 sensors. The sensing points are chosen as $(3,3,-3)$, $(3,-3,-3)$, $(-3,3,-3)$ and $(-3,-3,-3)$. The sampling points and the time discretization are chosen the same as that in Example 3. The reconstructions can be seen in Figure \ref{fig009}.

The error of the reconstruction with only 4 sensors is bigger than that of Example 3. Nevertheless, the smooth reconstruction ignores most of the error and the algorithm still works well.

\begin{figure}
\centering
\subfigure[]{
\includegraphics[scale=0.31]{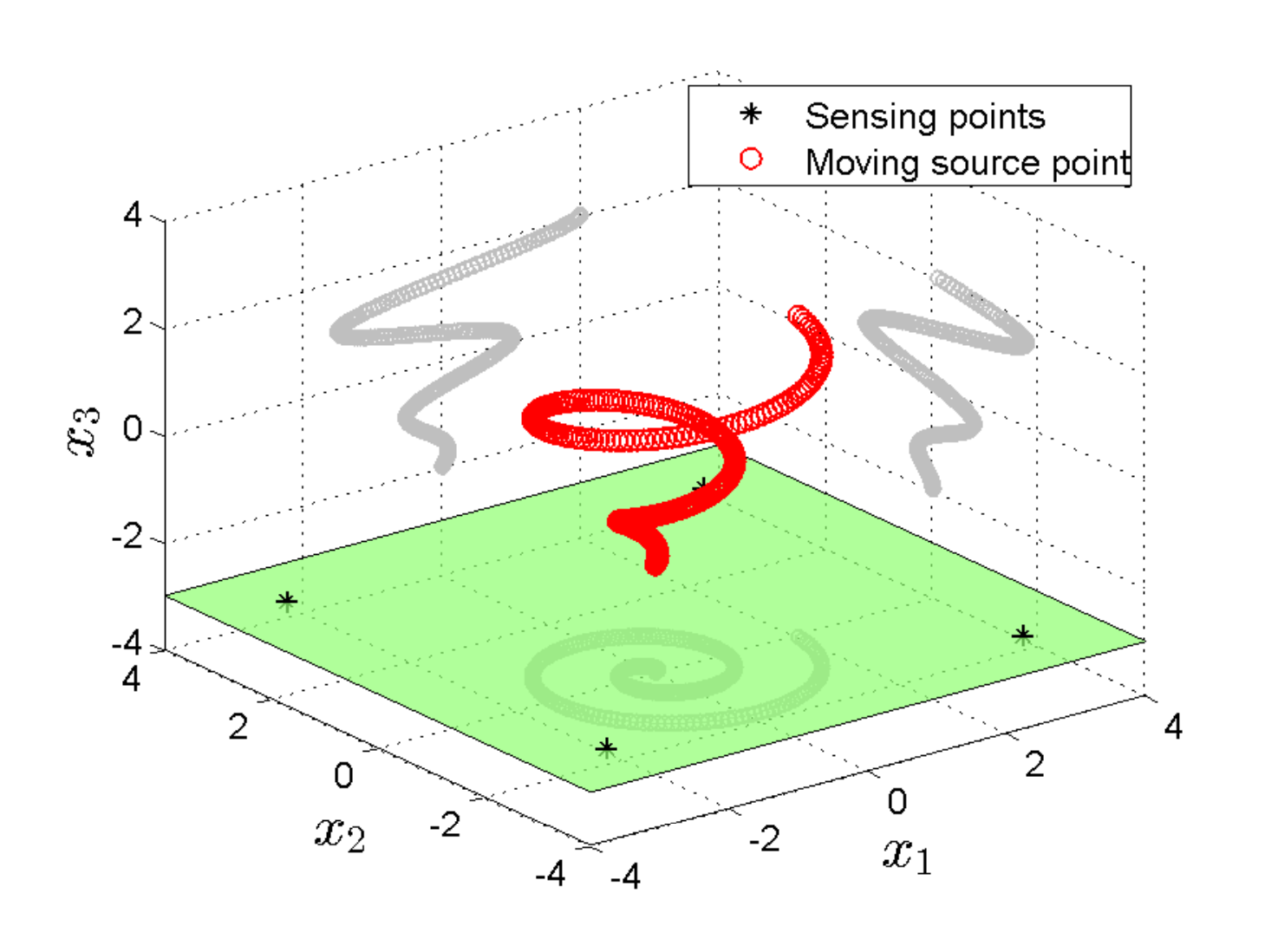}
}
\subfigure[]{
\includegraphics[scale=0.31]{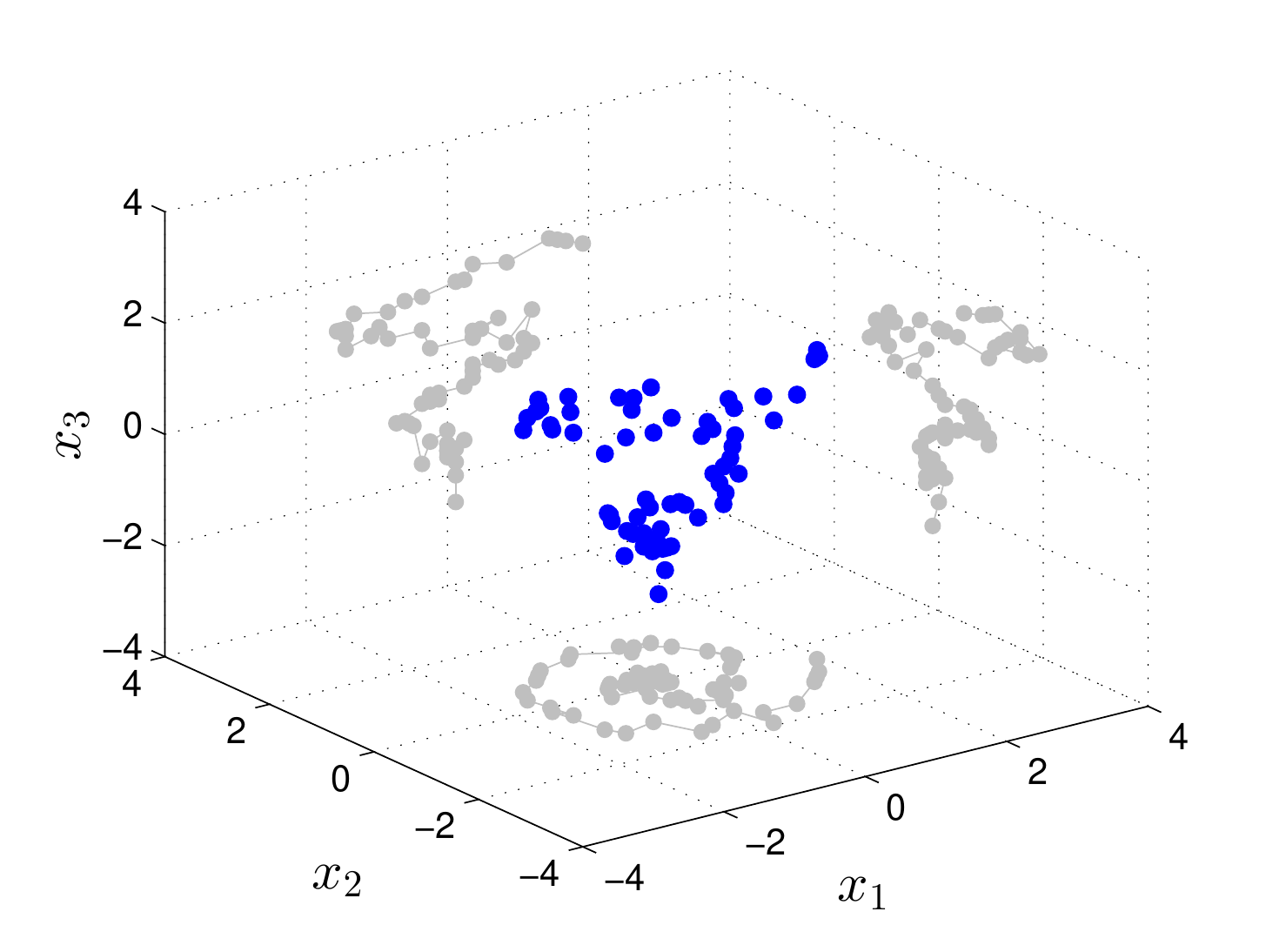}
}
\subfigure[]{
\includegraphics[scale=0.31]{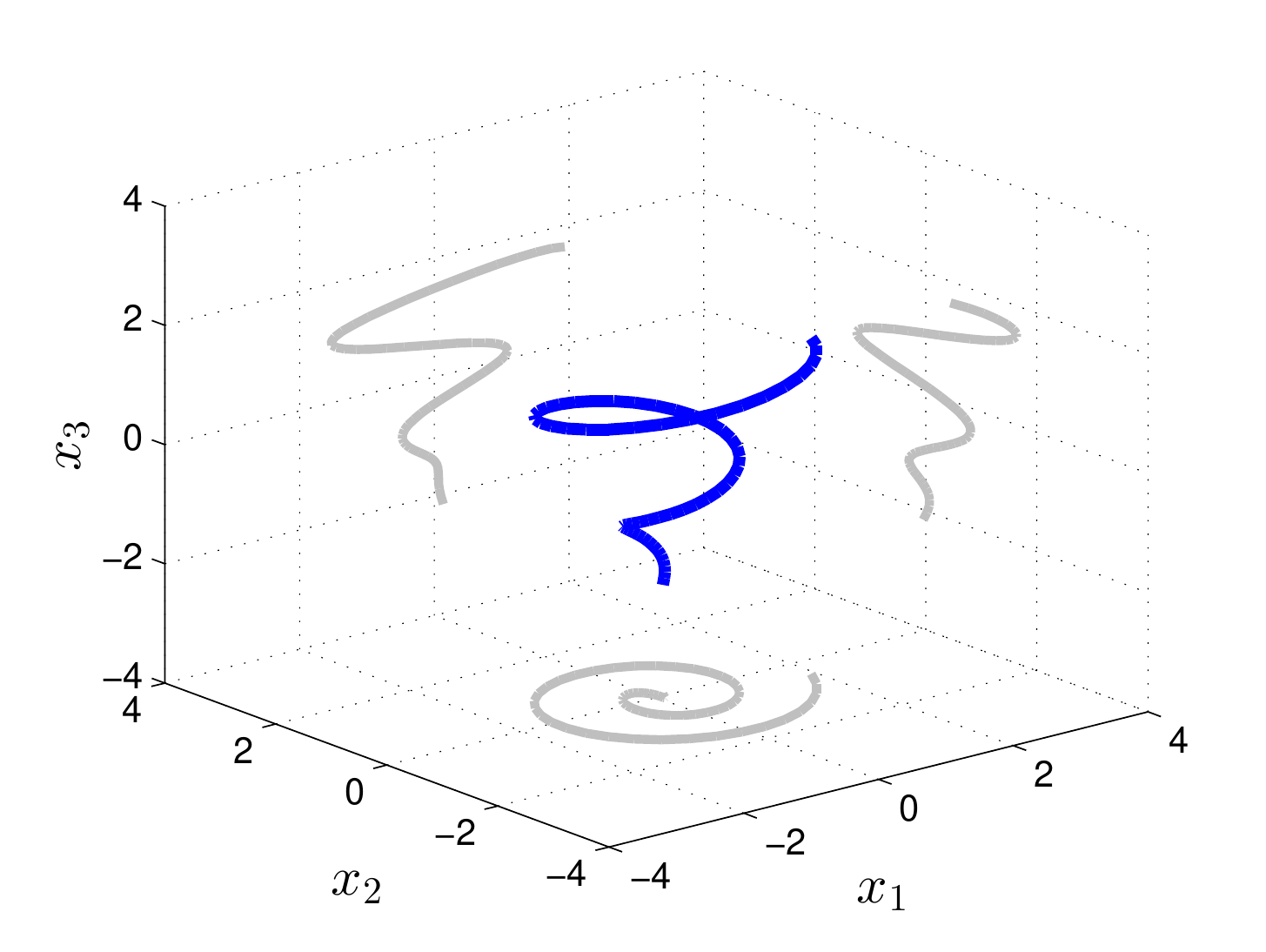}
}
\caption{Reconstruction of $s_3(t)$ with 4 sensors (Example 5). (a) Sketch of the example. (b) The reconstruction, $\epsilon=5\%$. (c) The smooth reconstruction with the Fourier expansion of order 3.} \label{fig009}
\end{figure}

\section{Conclusion}

We have considered the numerical simulation of the time dependent inverse source problems of acoustic waves. Modified methods of fundamental solutions have been established to reconstruct both multiple stationary sources and a moving point source. Moreover, the second modified method of fundamental solutions to reconstruct a moving point source has been modified to a simple sampling method. Several numerical examples have been provided to show the effectiveness of the proposed methods.

\section*{Acknowledgements}

The work of Bo Chen was supported by the NSFC (No. 11671170) and the Fundamental Research Funds for the Central Universities (Special Project for Civil Aviation University of China, No. 3122018L009). The work of Yukun Guo was supported by the NSFC (No. 11601107, 41474102 and 11671111). The work of Yao Sun was supported by the NSFC (No. 11501566).






\begin{thebibliography}{99}

\small
\bibitem{Ahmadabadi2009The}M. N. Ahmadabadi, M. Arab, and F. M. M. Ghaini. The method of fundamental solutions for the inverse space-dependent heat source problem. Engineering Analysis with Boundary Elements, 33(10):1231--1235, 2009.

\bibitem{Alves2009Iterative}C. Alves, R. Kress, and P. Serranho. Iterative and range test methods for an inverse source problem for acoustic waves. Inverse Problems, 25(5):055005, 2009.

\bibitem{Badia2011An}A. E. Badia and T. Nara. An inverse source problem for Helmholtz's equation from the Cauchy data with a single wave number. Inverse Problems, 27(10):105001, 2011.

\bibitem{Bao2014An}G. Bao, S. N. Chow, P. Li, and H. Zhou. An inverse random source problem for the Helmholtz equation. Mathematics of Computation, 83(285):215--233, 2014.

\bibitem{Bao2017Inverse}G. Bao, G. Hu, Y. Kian, and T. Yin. Inverse source problems in elastodynamics. Inverse Problems, 34(4):045009, 2017.

\bibitem{Bao2010A}G. Bao, J. Lin, and F. Triki. A multi-frequency inverse source problem. Journal of Differential Equations, 249(12):3443--3465, 2010.

\bibitem{Bao2015aresursive}G. Bao, S. Lu, W. Rundell, and B. Xu. A recursive algorithm for multi-frequency acoustic inverse source problems. SIAM Journal on Numerical Analysis, 53(3):1608--1628, 2015.

\bibitem{ChenB2016time}B. Chen, F. Ma, and Y. Guo. Time domain scattering and inverse scattering problems in a locally perturbed half-plane. Applicable Analysis, 96(8):1303--1325, 2017.

\bibitem{ChenB2019Method}B. Chen, Y. Sun, and Z. Zhuang. Method of fundamental solutions for a Cauchy problem of the Laplace equation in a half-plane. Boundary Value Problems, 2019(34):1--14, 2019.

\bibitem{Cheng2002UNIQUENESS}J. Cheng, G. Ding, and M. Yamamoto. Uniqueness along a line for an inverse wave source problem. Communications in Partial Differential Equations, 27(9--10):2055--2069, 2002.

\bibitem{Cheng2005The}J. Cheng, L. Peng, and M. Yamamoto. The conditional stability in line unique continuation for a wave equation and an inverse wave source problem. Inverse Problems, 21(6):1993--2007, 2005.


\bibitem{De2015An}M. V. De Hoop and J. Tittelfitz. An inverse source problem for a variable speed wave equation with discrete-in-time sources. Inverse Problems, 31(7):075007, 2015.

\bibitem{Fournier2017Matched}J. Fournier, J. Garnier, G. Papanicolaou, and C. Tsogka. Matched-filter and correlation-based imaging for fast moving objects using a sparse network of receivers. Siam Journal on Imaging Sciences, 10(4):2165--2216, 2017.

\bibitem{Garnier2015Super}J. Garnier and M. Fink. Super-resolution in time-reversal focusing on a moving source. Wave Motion, 53:80--93, 2015.

\bibitem{Guo2016A}Y. Guo, D. H\"{o}mberg, G. Hu, J. Li, and H. Liu. A time domain sampling method for inverse acoustic scattering problems. Journal of Computational Physics, 314:647--660, 2016.

\bibitem{guo2013toward}Y. Guo, P. Monk, and D. Colton. Toward a time domain approach to the linear sampling method. Inverse Problems, 29(9):095016, 2013.

\bibitem{Isakov1998Inverse}V. Isakov. Inverse Problems for Partial Differential Equations. Springer-Verlag, New York, 1998.

\bibitem{Lassas2010Inverse}M. Lassas and L. Oksanen. Inverse problem for wave equation with sources and observations on
disjoint sets. Inverse Problems, 26(8):085012, 2010.

\bibitem{Li2013Two}J. Li, H. Liu, Z. Shang, and H. Sun. Two single-shot methods for locating multiple electromagnetic scatterers. SIAM Journal on Applied Mathematics, 73(4):1721--1746, 2013.

\bibitem{Lijingzhi2018On}J. Li, H. Liu, and H. Sun. On a gesture-computing technique using electromagnetic waves. Inverse Problems and Imaging, 12(3):677--696, 2018.

\bibitem{Li2008Multilevel}J. Li, H. Liu, and J. Zou. Multilevel linear sampling method for inverse scattering problems. Journal of Computational Physics, 30(3):1228--1250, 2008.

\bibitem{Li2009Strengthened}J. Li, H. Liu, and J. Zou. Strengthened linear sampling method with a reference ball. SIAM Journal on Scientific Computing, 31(6):4013--4040, 2009.

\bibitem{Li2014Locating}J. Li, H. Liu, and J. Zou. Locating multiple multiscale acoustic scatterers. SIAM Multiscale Modeling and Simulations, 12(3):927--952, 2014.

\bibitem{Li2011An}P. Li. An inverse random source scattering problem in inhomogeneous media. Inverse Problems, 27(3):035004, 2011.

\bibitem{Li2016Increasing}P. Li and G. Yuan. Increasing stability for the inverse source scattering problem with multi-frequencies. Inverse Problems and Imaging, 11(4):745--759, 2016.

\bibitem{Lim1994On}P. H. Lim and J. M. Ozard. On the underwater acoustic field of a moving point source. i. range-independent environment. Journal of the Acoustical Society of America, 95(1):131--137, 1994.

\bibitem{Lim1994On2}P. H. Lim and J. M. Ozard. On the underwater acoustic field of a moving point source. ii. range-dependent environment. Journal of the Acoustical Society of America, 95(1):138--151, 1994.

\bibitem{Matsumoto2003A}M. Matsumoto, M. Tohyama, and H. Yanagawa. A method of interpolating binaural impulse responses for moving sound images. Acoustical Science and Technology, 24(5):284--292, 2003.

\bibitem{Nakaguchi2012An}E. Nakaguchi, H. Inui, and K. Ohnaka. An algebraic reconstruction of a moving point source for a scalar wave equation. Inverse Problems, 28(6):065018, 2012.

\bibitem{sayas2011retarded}F. J. Sayas. Retarded Potentials and Time Domain Boundary Integral Equations: a Road-map. Springer Series in Computational Mathematics, Switzerland, 2016.


\bibitem{Sun2014Modified}Y. Sun. Modified method of fundamental solutions for the Cauchy problem connected with the Laplace equation. International Journal of Computer Mathematics, 91(10):2185--2198, 2014.

\bibitem{Sun2017Indirect}Y. Sun. Indirect boundary integral equation method for the Cauchy problem of the Laplace equation. Journal of Scientific Computing, 71(2):469--498, 2017.

\bibitem{Ton2003An}B. A. Ton. An inverse source problem for the wave equation. Nonlinear Analysis, 55(3):269--284, 2003.

\bibitem{wang2017mathematical}X. Wang, Y. Guo, J. Li, and H. Liu. Mathematical design of a novel input/instruction device using a moving emitter. Inverse Problems, 33(10):105009, 2017.

\bibitem{Wei2010Convergence}T. Wei and D. Y. Zhou. Convergence analysis for the Cauchy problem of Laplace's equation by a regularized method of fundamental solutions. Advances in Computational Mathematics, 33(4):491--510, 2010.

\bibitem{Zhang2015Fourier}D. Zhang and Y. Guo. Fourier method for solving the multi-frequency inverse source problem for the Helmholtz equation. Inverse Problems, 31(3):035007, 2015.

\bibitem{Zhang2018Uniqueness}D. Zhang and Y. Guo. Uniqueness results on phaseless inverse scattering with a reference ball. Inverse Problems, 34(8):085002, 2018.

\bibitem{Zhang2018Retrieval}D. Zhang, Y. Guo, J. Li, and H. Liu. Retrieval of acoustic sources from multi-frequency phaseless data. Inverse Problems, 34(9):094001, 2018.

\bibitem{Zhang2018Locating}D. Zhang, Y. Guo, J. Li, and H. Liu. Locating multiple multipolar acoustic sources using the direct sampling method. Communications in Computational Physics, 25(5):1328--1356, 2019.

\end{thebibliography}


\end{document}